\newif\ifdebug
\newcommand{\SK}{\mathcal{K}}
\newcommand{\momang}{\mathcal{Z}_{\mathcal{K}}}
\newcommand{\cmomang}{\mathcal{Z}_{\Sigma}}
\newcommand{\FoM}{\varOmega(\momang)}
\newcommand{\FoMin}{\varOmega(\momang)^{T^m}}
\newcommand{\NB}{\mathcal{N}}
\newcommand{\CM}{\mathcal{C}}
\newcommand{\W}{\mathcal{W}}
\newcommand{\Hom}{\mathrm{Hom}}
\def\Ker{\mathop{\mathrm{Ker}}}
\def\Tor{\mathop{\mathrm{Tor}}\nolimits}
\def\hb{H_{\mathrm{bas}}}
\def\textgoth{\mathfrak}
\def\C{\mathbb C}
\def\R{\mathbb R}
\def\g{\textgoth g}
\def\h{\textgoth h}
\newcommand{\mb}[1]{{\textbf {\textit#1}}}
\def\ge{\geqslant}
\def\le{\leqslant}
\def\leq{\leqslant}
\theoremstyle{plain}
\newtheorem{thm}{Theorem}[section]
\newtheorem{lemma}[thm]{Lemma}
\newtheorem{proposition}[thm]{Proposition}
\theoremstyle{definition}
\newtheorem{defn}[thm]{Definition} 
\newtheorem{rmk}[thm]{Remark}
\newtheorem{constr}[thm]{Construction}
\begin{document}
\title[Basic cohomology of canonical holomorphic foliations]{Basic cohomology of canonical holomorphic foliations on complex moment-angle manifolds}

\author{Hiroaki Ishida}
\address{Department of Mathematics and Computer Science, Graduate School of Science
and Engineering, Kagoshima University, 890-0065 Kagoshima, Japan}
\email{ishida@sci.kagoshima-u.ac.jp}

\author{Roman Krutowski}
\address{Faculty of Mathematics, International Laboratory of Mirror Symmetry
and Automorphic Forms, National Research University Higher School of Economics, 
119048 Moscow, Russia}
\email{roman.krutovskiy@protonmail.com}

\author{Taras Panov}
\address{Department of Mathematics and Mechanics, Lomonosov Moscow
State University, Leninskie gory, 119991 Moscow, Russia;\newline
Institute for Information Transmission Problems, Russian Academy of Sciences, Moscow;\newline
Institute of Theoretical and Experimental Physics, Moscow}
\email{tpanov@mech.math.msu.su}
\thanks{This work was supported by the Grant-in-Aid for Young Scientists (B) (16K17596), and joint research projects and joint seminars under the bilateral program ``Topology and geometry of torus actions, cohomological rigidity, and hyperbolic manifolds" from Japan Society for the Promotion of Science (to H.\,I.); the Laboratory of Mirror Symmetry National Research University Higher School of Economics (RF Government grant, ag. No. 14.641.31.0001), the Simons Foundation, the Moebius Contest Foundation for Young Scientists, and the Foundation for the Advancement of Theoretical Physics and Mathematics ``BASIS'' (to R.\,K.); the Russian Foundation for Basic Research (grants no. 20-01-00675, 18-51-50005), and a Simons IUM Fellowship (to T.\,P.).
The authors thank the organisers of the conference ``Glances at Manifolds~2018'' in Krakow for giving us the opportunity to meet and exchange ideas that laid the foundation for this work.}

\subjclass[2010]{32L05, 32Q55, 37F75, 57R19, 14M25}
\keywords{holomorphic foliation, complex moment-angle manifold, maximal torus action, basic cohomology, Cartan model, formality, transverse equivalence}

\begin{abstract}
We describe the basic cohomology ring of the canonical holomorphic foliation on a moment-angle manifold, LVMB-manifold or any complex manifold with a maximal holomorphic torus action. Namely, we show that the basic cohomology has a description similar to the cohomology ring of a complete simplicial toric variety due to Danilov and Jurkiewicz. This settles a question of Battaglia and Zaffran, who previously computed the basic Betti numbers for the canonical holomorphic foliation in the case of a shellable fan. Our proof uses an Eilenberg--Moore spectral sequence argument; the key ingredient is the formality of the Cartan model for the torus action on a moment-angle manifold. We develop the concept of transverse equivalence as an important tool for studying smooth and holomorphic foliated manifolds.
For an arbitrary complex manifold with a maximal torus action, we show that it is transverse equivalent to a moment-angle manifold and therefore has the same basic cohomology.
\end{abstract}

\maketitle

\section{Introduction}
The \emph{moment-angle complex} $\momang$ corresponding to a simplicial complex~$\SK$ is a topological space formed by gluing together products of discs and circles according to a recipe determined by the combinatorics of~$\SK$, see \cite{buchstaber2000torus,buchstaber2015toric}. The space $\momang$ carries a natural torus action, and $\momang$ is a manifold when $\SK$ is a triangulated sphere. An even-dimensional moment-angle manifold $\cmomang$ admits a complex structure invariant under the torus action if and only if $\SK$ is the underlying complex of a complete simplicial fan~$\Sigma$ (in other words, $\SK$ is a star-shaped sphere triangulation)~\cite{panov2012complex,tamb12,ishida2013complex}. 

Complex moment-angle manifolds are a subclass of \emph{LVMB-manifolds}. The latter is a class of  non-K\"ahler complex manifolds with holomorphic torus action introduced by Bosio~\cite{bosio2001varietes} as  a generalisation of \emph{LVM-manifolds} defined via intersections of Hermitian quadrics in the works of L\'opez de Medrano--Verjovsky~\cite{lo-ve97} and Meerssemann~\cite{meer00}. It was proved in~\cite[Theorem~9.4]{ishida2013complex} that complex moment-angle manifolds are biholomorphic to LVMB-manifolds of certain type, namely to those LVMB-manifolds for which $0$ is indispensable in the corresponding LVMB-datum. The LVMB-datum can be thought of as the collection of cones Gale dual to the cones in the simplicial fan defining a complex moment-angle manifold. In this formalism, LVM-manifolds correspond to simplicial fans which are  normal fans of convex polytopes.

LVMB-manifolds (and therefore complex moment-angle manifolds) are examples of \emph{complex manifolds with maximal torus action}, completely classified in~\cite{ishida2013complex} in terms of simplicial fans. If a compact torus $T$ acts on a smooth manifold $M$ effectively, then the following inequality holds for any $x\in M$: 
\[
  \dim T+\dim T_x\le\dim M,
\]
where $T_x$ denotes the stabiliser at~$x$. A $T$-action is called \emph{maximal} if the above turns into an equality for some $x\in M$. Examples of compact complex manifolds with a maximal action of torus by holomorphic transformations include holomorphic tori, Hopf and Calabi--Eckmann manifolds, moment-angle manifolds, LVM- and LVMB-manifolds, and nonsingular complete toric varieties (\emph{toric manifolds}). Moment-angle manifolds (or LVMB-manifolds with $0$ indispensable) are universal in the classification of~\cite{ishida2013complex}, in the sense that any complex manifolds with a maximal torus action is the quotient of a complex moment-angle manifold by a free action of closed subgroup of torus.

Battaglia and Zaffran~\cite{battaglia2015foliations} considered a certain holomorphic foliation on a complex LVMB-manifold, which later was shown in to be a particular case of the \emph{canonical} foliation on any complex manifold with a holomorphic torus action~\cite{ishida2017torus}. We review the construction of this canonical foliation in Section~\ref{prelim}. Battaglia and Zaffran computed the basic Betti numbers for their foliation in the case when the associated complete fan is shellable. Their method consisted in applying the Mayer--Vietoris sequence. They conjectured that the basic cohomology ring has a description similar to the cohomology ring of a complete simplicial toric variety due to Danilov and Jurkiewicz~\cite{danilov1978geometry}. The conjecture was justified by the fact that in the case of a complete regular fan the foliation becomes a locally trivial bundle over the associated toric variety with fibre a holomorphic torus (see Remark~\ref{holofib}).

In this paper we first prove the conjecture for all complex moment-angle manifolds with invariant complex structure (see Theorem~\ref{basicTh}). Our approach is different from that of Battaglia--Zaffran: we use the Eilenberg--Moore spectral sequence and establish the formality of the Cartan model for the torus action on $\momang$ (see Lemma~\ref{formality}). 

In the second part of the paper we study the notion of transverse equivalence for foliated manifolds. It is useful, as the basic cohomology rings of transverse equivalent foliated manifolds are isomorphic. We adapt the notion of transverse equivalence to our situation of complex manifolds with maximal torus actions and their canonical foliations (see Definition~\ref{defn:equivalent}). We use the classification results of the first author~\cite{ishida2013complex} for complex manifolds with maximal torus actions to show that the transverse equivalence class of such a manifold is determined by its marked fan data (Theorem~\ref{thm:fundamental}). As a consequence, we prove that any complex manifold with a maximal torus action is transverse equivalent to a complex moment-angle manifold (Theorem~\ref{thm:ess.surjective}). This gives a description of the basic cohomology ring for any complex manifold with a maximal torus action (Theorem~\ref{basiccohgen}). Since LVMB manifolds are a particular class of maximal torus actions, the conjecture of Battaglia and Zaffran is proved completely. 

An important consequence is that the basic cohomology ring of a complex manifold with a maximal torus action depends only on the combinatorics of the underlying simplicial fan, and does not depend on the other pieces of data defining the complex structure. This gives additional justification for considering the transverse equivalence classes of maximal torus actions as proper analogues of complete toric varieties in the non-rational case~\cite{k-l-m-v}.

We note that our approach can be also applied to smooth moment-angle manifolds 
$\momang$ rather than complex ones. Nevertheless, it is important to emphasise the holomorphic nature of the foliation under consideration. The reason is that we hope that our methods can be applied for calculation of the basic Dolbeault cohomology for the foliation and Dolbeault cohomology of complex moment-angle manifolds. Recently these rings were computed  for the case when the foliation is transverse K{\"a}hler, that is, when the fan $\Sigma$ is polytopal. %(see~\cite{ishida2018towards}). 
In the general case, the description of the Dolbeault cohomology rings is an open problem, which we shall address in a subsequent work.

We thank the anonymous referees for their valuable comments and useful suggestions on improving the text.

\section{Preliminaries}\label{prelim}

\subsection{The moment-angle complex}
An abstract \emph{simplicial complex} on the set $[m]=\{1,2,\ldots,m\}$ is a collection \( \mathcal{K} \) of subsets $ I \subset [m]$ such that if $I \in \mathcal{K}$ then each $J \subset I$ also belongs to \( \mathcal{K} \).
We assume that the empty set $\varnothing$ is in~$\mathcal K$. A one-element subset 
$\{i\}\subset[m]$ is a \emph{vertex} if $\{i\}\in\mathcal K$; otherwise it is a \emph{ghost vertex}.

The \emph{moment-angle complex} $\mathcal{Z}_{\mathcal{K}}$ corresponding to $\mathcal{K}$ is a topological space constructed as follows. Consider the unit $m$-dimensional polydisc:
\[
  \mathbb{D}^m=\{(z_1,\ldots,z_m)\in \mathbb{C}^m\colon |z_i|^2 \le 1\text{ for }i=1,\ldots,m \}.
\]
Then 
\[
 \mathcal{Z}_{\mathcal{K}}:= \bigcup\limits_{I \in \mathcal{K}} \,\Bigl( \prod_{i \in I} \mathbb{D} \times \prod_{i    \notin I} \mathbb{S}\Bigr)\subset\mathbb{D}^m,
\] 
where $\mathbb{S}$ is the boundary of the unit disk $\mathbb{D}$.

The moment-angle complex is equipped with %Taras 
a natural action of the torus 
\[
  T^m=\{(t_1,\ldots,t_m)\in\C^m\colon |t_i|=1\}.
\]
When $\SK$ is simplicial subdivision of a sphere, $\momang$ is a topological manifold~\cite[Theorem~4.1.4]{buchstaber2015toric}, called the \emph{moment-angle manifold}.

We define an open submanifold $U(\SK)\subset\mathbb C^m$ in a similar way:
\[
  U(\SK):=\bigcup\limits_{I \in \mathcal{K}} \,
  \Bigl( \prod_{i \in I} \mathbb{C} \times 
  \prod_{i\notin I} \mathbb{C}^\times\Bigr),
\] 
where $\C^\times=\C\setminus\{0\}$.
The manifold $U(\SK)$ has a coordinate-wise action of the algebraic torus $(\mathbb{C}^{\times})^m$, in which $T^m$ is a maximal compact subgroup. Furthermore, 
$U(\SK)$ is a toric variety with the corresponding fan given by
\begin{equation*}
	\Sigma_\SK = \{ \R_\ge\langle\mb e_i \colon i \in I\rangle \colon I \in \SK\}, 
\end{equation*}
where $\mb e_i$ denotes the $i$-th standard basis vector of $\R^m$ and $\R_\ge\langle A\rangle$ denotes the cone spanned by the elements in $A$. 

By definition, $\momang$ is a compact subset of $U(\SK)$. Furthermore, $\momang$ is a $T^m$-equivariant deformation retract of~$U(\SK)$~\cite[Theorem~4.7.5]{buchstaber2015toric}. In the case when $\SK$ is the underlying complex of a complete simplicial fan, the deformation retraction $U(\SK)\to\momang$ can be realised as the projection onto the orbit space of a smooth free and proper action of a non-compact subgroup $R\subset(\C^\times)^m$ isomorphic 
to~$\R^{m-n}$, as described next.

\begin{constr}\label{realfol}
Suppose $\SK$ is the underlying complex of a complete simplicial (not necessarily rational) fan $\Sigma$ in an $n$-dimensional space~$V$. Choose generators $a_1,\ldots,a_m$ of the one-dimensional cones of~$\Sigma$ and consider the surjective linear map 
\begin{equation}\label{qmap}
  q\colon \R^m\to V,\quad e_i\mapsto a_i.
\end{equation}
Set
\begin{equation}\label{realsubgr}
  \mathfrak h'=\Ker q, \quad 
  R=\exp(\mathfrak h')=\{e^{h'}\colon h'\in\mathfrak h'\}\subset(\R^\times)^m,\quad
  H'=\exp(i\mathfrak h')
  %=\{e^{ih'}\colon h'\in\mathfrak h'\}
  \subset T^m.
\end{equation}
Note that the subgroup $H'\subset T^m$ is not closed unless $\mathfrak h'$ is a rational subspace of~$\R^m$.

By~\cite[Theorem~2.2]{panov2012complex}, the action of $R$ on $U(\SK)$ free and proper, and the quotient $U(\SK)/R$ is $T^m$-equivariantly homeomorphic to the moment-angle manifold 
$\momang$.
Furthermore, the subgroup $H'\subset T^m$ acts on $\momang=U(\SK)/R$ by restriction. The $H'$-action on $\momang$ is almost free (i.\,e., all stabiliser subgroups are discrete), see~\cite[Proposition~5.4.6]{buchstaber2015toric}. We therefore obtain a smooth foliation of $\momang$ by the orbits of~$H'$. This foliation and its holomorphic analogue will be studied further in Sections~\ref{basiczk} and~\ref{transverse}. 
\end{constr}

\subsection{Basic cohomology and equivariant cohomology} 
Let $\textgoth{g}$ be a Lie algebra. A \emph{$\g^\star$-differential graded algebra} ($\g^\star$-DGA for short) is a differential graded algebra (DGA for short) equipped with an action of operators $\iota_\xi$ (concatenation) and $L_\xi$ (Lie derivative) for $\xi \in \g$, see \cite[Definition 3.1]{goertsches2010equivariantbasic} for the details. For a $\g^\star$-DGA $(A, d_A)$, the basic subcomplex $A_{\mathrm{bas}\,\g}$ is given by 
\begin{equation*}
	A_\mathrm{bas\,\g} := \{ \omega \in A \colon \iota _\xi \omega = L_\xi \omega = 0 \text{ for any $\xi \in \g$} \}.
\end{equation*}
\emph{Basic cohomology} of $A$ is given by 
\[
  H_{\mathrm{bas}\,\g}(A) = H(A_{\mathrm{bas}\,\g},d_A). 
\]
We omit $\g$ by writing $H_{\mathrm{bas}}(A)$ for simplicity when $\g$ is clear from the context. 

Let $S(\g^*)$ denote the symmetric (polynomial) algebra on the dual Lie algebra $\g^*$ with generators of degree~$2$, and $\varLambda(\g^*)$  the exterior algebra with generators of degree~$1$. 
The \emph{Weil algebra} of $\g$ is the DGA
\[
  \W (\g):=\bigl(\varLambda(\g^*)\otimes S(\g^*),d_{\W(\g)}\bigr)
\]
with the standard acyclic (Koszul) differential~$d_{\W(\g)}$. 
We refer to $\W(\g)$ simply as $\W$ when $\g$ is clear from the context.
There are two models for equivariant cohomology of $A$. The \emph{Cartan model} is defined as 
\begin{equation*}
	\CM_{\g}(A) = ((S(\g^*) \otimes A)^\g, d_\g),  
\end{equation*}
where $(S(\g^*) \otimes A)^\g$ denotes the $\g$-invariant subalgebra.
We think of an element $\omega \in \CM_\g(A)$ as a $\g$-equivariant polynomial map from $\g$ to $A$. The differential $d_\g$ is given by 
\begin{equation*}
	d_\g(\omega)(\xi) = d_A(\omega(\xi)) - \iota_\xi(\omega(\xi)). 
\end{equation*}
The \emph{Weil model} is defined as 
\begin{equation*}
	\W_\g(A) = ((\W \otimes A)_\mathrm{bas}, d),
\end{equation*}
where $d = d_\W \otimes 1 + 1 \otimes d_A$. The Mathai--Quillen isomorphism~\cite{mathai1986superconnections} implies that the Weil model $W_\g(A)$ and the Cartan model $\CM_\g(A)$ have the same cohomology $H_\g(A)$. The algebra $H_\g(A)$ is called the \emph{$\g$-equivariant cohomology} of the $\g^\star$-algebra $A$. 

A \emph{$\W^\star$-algebra} $B$ is a $\g^\star$-DGA which is also a $\W$-module, see \cite[Definition 3.4.1]{guillemin2013supersymmetry}. For a $\W^\star$-algebra $B$, there are weak equivalences between $B_{\mathrm{bas}}$ and the algebras  $\CM_\g(B)$ and $\W_\g(B)$, see \cite[Section 5.1]{guillemin2013supersymmetry}. In particular, we have $H_\mathrm{bas}(B) \cong H_\g(B)$ if $B$ is a $\W^\star$-algebra. 

\smallskip

Now let $M$ be a smooth manifold equipped with an action of a connected Lie group $G$, and let $\g$ be the Lie algebra of~$G$. Then the algebra $\varOmega(M)$ of differential forms on $M$ is a $\W^\star$-algebra, so we have algebra isomorphisms
\[
  H_\mathrm{bas}(\varOmega(M))\cong H\bigl(\CM_{\g}(\varOmega(M))\bigr)\cong 
  H\bigl(\W_\g(\varOmega(M))\bigr).
\]
If in addition $G$ is a compact, then the algebra above is isomorphic to the equivariant cohomology $H_G^*(M) := H^*(EG \times _G M)$, see~\cite[Theorem~2.5.1]{guillemin2013supersymmetry}:
\[
H_\mathrm{bas}(\varOmega(M)) \cong H_G^*(M).
\]

\subsection{The case of torus actions} 
Let $G$ be a compact torus, and $M$ a smooth $G$-manifold. Let $\h'$ be a subspace of the Lie algebra $\g$ of $G$, and $H'$ the corresponding Lie subgroup of~$G$. Assume that the action restricted to $H'$ is almost free. Then we have a smooth foliation of $M$ by $H'$-orbits. It follows from \cite[Lemma 4.4]{ishida2018transverse} that $\varOmega(M)$ and $\varOmega(M)^G$ have a structure of $\W(\h')^\star$-algebras.  
\begin{lemma}\label{invariant}
	The natural inclusion $\varOmega(M)_{\mathrm{bas}\,\h'}^G \hookrightarrow \varOmega(M)_{\mathrm{bas}\,\h'}$ is a quasi-isomorphism.
\end{lemma}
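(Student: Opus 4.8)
The plan is to run the standard averaging argument over the compact torus $G$ and to verify, at every step, that it restricts to the $\h'$-basic subcomplexes. Since $G$ is a torus, it is abelian, so $\mathrm{Ad}_g=\id$ for every $g\in G$; consequently the pullback $g^*$ commutes with $\iota_\xi$ and $L_\xi$ for all $\xi\in\g$, in particular for $\xi\in\h'$. Hence each $g^*$ preserves $\varOmega(M)_{\mathrm{bas}\,\h'}$, and therefore so does the averaging operator $P\colon\varOmega(M)\to\varOmega(M)^G$, $P\omega=\int_G g^*\omega\,dg$. Because $P$ is a chain map landing in $G$-invariant forms, it gives a well-defined chain map $P\colon\varOmega(M)_{\mathrm{bas}\,\h'}\to\varOmega(M)_{\mathrm{bas}\,\h'}^G$ that restricts to the identity on $\varOmega(M)_{\mathrm{bas}\,\h'}^G$; in particular $P\circ\iota=\id$, where $\iota$ is the inclusion in the statement.

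Next I would produce the chain homotopy. For a basis $\xi_1,\dots,\xi_k$ of the integral lattice of $\g$, set $g_s=\exp(s_1\xi_1)\cdots\exp(s_k\xi_k)$ and use Cartan's formula $L_{\xi_j}=d\iota_{\xi_j}+\iota_{\xi_j}d$ together with $\tfrac{d}{ds_j}g_s^*=L_{\xi_j}g_s^*$ to telescope a homotopy operator $H$ on $\varOmega(M)$ with $\id-\iota\circ P=dH+Hd$. Concretely, $H$ is built from the averaging operators $P_j\omega=\int_0^1(\exp(s\xi_j))^*\omega\,ds$ and from the operators $\omega\mapsto\int_0^1\iota_{\xi_j}(\exp(s\xi_j))^*\omega\,ds$, so each summand of $H$ involves exactly one contraction $\iota_{\xi_j}$ and otherwise only pullbacks and integrations. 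The crucial point to check is that each $\iota_{\xi_j}$ (with $\xi_j\in\g$, not necessarily in $\h'$) preserves $\varOmega(M)_{\mathrm{bas}\,\h'}$: for $\xi\in\h'$ and $\omega\in\varOmega(M)_{\mathrm{bas}\,\h'}$ one has $\iota_\xi\iota_{\xi_j}\omega=-\iota_{\xi_j}\iota_\xi\omega=0$ and $L_\xi\iota_{\xi_j}\omega=\iota_{\xi_j}L_\xi\omega+\iota_{[\xi,\xi_j]}\omega=0$, using $[\xi,\xi_j]=0$ since $\g$ is abelian. Together with the fact that pullbacks and integration in $s$ preserve the subcomplex, this shows $H$ restricts to $\varOmega(M)_{\mathrm{bas}\,\h'}$.

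Putting this together, within $\varOmega(M)_{\mathrm{bas}\,\h'}$ the maps $\iota$ and $P$ are mutually inverse up to the chain homotopy $H$, so $\iota$ induces an isomorphism on basic cohomology, which is the claim. The one genuine subtlety — and the only place where a hypothesis is really used — is the stability of the homotopy operator under the $\h'$-basic conditions; this is exactly where abelianness of $G$ enters, and the argument would break for a nonabelian group. I would also remark that the possible non-closedness of $H'$ is irrelevant here, since the $\h'$-basic subcomplex is defined purely infinitesimally through $\iota_\xi$ and $L_\xi$ for $\xi\in\h'$, and the proof never needs $H'$ itself.
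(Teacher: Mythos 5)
Your proposal is correct and follows essentially the same route as the paper: the paper also averages over $G$, applies Cartan's formula along the flows $g_t=\exp_G(tv)$ to produce the primitive $\theta_{g_1}=\int_0^1 g_t^*(\iota_{X_v}\alpha)\,dt$, and invokes commutativity of $G$ at exactly the point you identify as the crucial one, namely to see that this form (built from a contraction $\iota_{X_v}$ with $X_v\in\g$ not necessarily in $\h'$) stays in the $\h'$-basic subcomplex. The only cosmetic difference is that you package the argument as a single chain homotopy $\id-\iota\circ P=dH+Hd$, whereas the paper proves injectivity via the retraction and surjectivity by exhibiting $I(\alpha)-\alpha$ as exact.
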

\begin{proof}
First, we show that the induced homomorphism $H_{\mathrm{bas}\,\h'}(\varOmega(M)^G) \to H_{\mathrm{bas}\,\h'}(\varOmega(M))$ is injective. Let $I \colon\varOmega(M)_{\mathrm{bas}\,\h'} \to \varOmega(M)^G_{\mathrm{bas}\,\h'}$ be the linear map given by 
		\begin{equation*}
			I(\alpha) = \int_{g \in G} g^*\alpha\, dg,\quad \alpha \in \varOmega(M)_{\mathrm{bas}\,\h'},
		\end{equation*}
		where $dg$ denotes the normalised Haar measure on $G$. Then the composite $\varOmega(M)^G_{\mathrm{bas}\,\h'} \hookrightarrow \varOmega(M)_{\mathrm{bas}\,\h'}\stackrel I\longrightarrow \varOmega(M)^G_{\mathrm{bas}\,\h'}$ is the identity. Passing to cohomology, we obtain that the homomorphism $H_{\mathrm{bas}\,\h'}(\Omega(M)^G) \to H_{\mathrm{bas}\,\h'}(\Omega(M))$ is injective. 
		
Now, we prove that $H_{\mathrm{bas}\,\h'}(\varOmega(M)^G) \to H_{\mathrm{bas}\,\h'}(\varOmega(M))$ is surjective. This will be done by showing that $I(\alpha)$ and $\alpha$ define the same cohomology class in $H_{\mathrm{bas}\,\h'}(\varOmega(M))$. 

Let $[\alpha] \in H_{\mathrm{bas}\,\h'}(\varOmega(M))$ be a cohomology class represented by $\alpha \in \varOmega_{\mathrm{bas}\,\h'}(M)$. Let $\exp_G \colon \g \to G$ be the exponential map, and let $\gamma_1,\dots, \gamma_n$ be a lattice basis of $\Ker \exp_G$. Define 
		\begin{equation*}
			D:= \biggl\{ v = \sum_{i=1}^n a_i \gamma_i :  0\leq a_i <1\biggr\}.
		\end{equation*} 
		Then the exponential map restricted to $D$ gives a bijection $\exp_G|_D \colon D \to G$. For $v \in D$ and $t \in \R$, we define $g_t := \exp_G(tv) \in G$ and 
		\begin{equation*}
			\theta_{g_1} := \int_0^1 g_t^*(\iota_{X_v}\alpha) dt \in \varOmega_{\mathrm{bas}\,\h'}(M).
		\end{equation*}
The form $\theta_{g_1}$ is basic because $G$ is a commutative group.
We have
		\begin{equation*}
			\lim_{h \to 0} \frac{g^*_{t+h}\alpha - g^*_t\alpha}{h} = L_{X_v}g^*_t\alpha
				= i_{X_v}dg^*_t\alpha + di_{X_v}g^*_t\alpha 
				= di_{X_v}g^*_t\alpha 
				= dg^*_ti_{X_v}\alpha, 
		\end{equation*}
implying that
		\begin{equation*}
			d\theta_{g_1} = d\int_0^1 g_t^*i_{X_v}\alpha dt 
				=\int_0^1 dg_t^*i_{X_v}\alpha dt
				= g^*_1\alpha - g^*_0 \alpha 
				= g^*_1 \alpha - \alpha.
		\end{equation*}
Therefore 
		\begin{equation*}
				\int_{g_1 \in G}(g^*_1\alpha-\alpha) dg = \int_{g_1 \in G}d \theta_{g_1}dg 
				= d\int_{g_1 \in G}\theta_{g_1} dg. 
		\end{equation*}
On the other hand, 
		\begin{equation*}
			\int_{g_1 \in G}(g^*_1\alpha-\alpha) dg = I(\alpha) - \alpha.
		\end{equation*}
It follows that $I(\alpha)$ and $\alpha$ represent the same class in $H_{\mathrm{bas}\,\h'}(\varOmega(M))$. This together with $I(\alpha) \in \varOmega_{\mathrm{bas}\,\h'}(M)^G$ yields that the induced homomorphism $H_{\mathrm{bas}\,\h'}(\varOmega(M)^G)\to H_{\mathrm{bas}\,\h'}(\varOmega(M))$ is surjective, proving the lemma.
\end{proof}

\section{Basic cohomology of $\momang$}\label{basiczk}
Here we describe the basic cohomology algebra of $\momang$ with respect to the foliation by the orbits of the subgroup $H'\subset T^m$ defined in Construction~\ref{realfol}:
\[
  \hb^*(\momang) := H_{\mathrm{bas}\,\h'}(\varOmega (\momang)).
\]  
We first reduce the computation of $\hb^*(\mathcal Z_\Sigma)$ to cohomology of a special DGA:

\begin{lemma}\label{cohN}
Consider the algebra
\[
  \NB:=\CM_{\h' }\bigl(\varOmega (\momang)^{T^m}\bigr)=
  \bigl(S(\h'^*)\otimes\varOmega (\momang)^{T^m}
  ,d_{\h'}\bigr). 
\]
Then we have an isomorphism
\[
  \hb^*(\momang)\cong H(\NB).
\]
\end{lemma}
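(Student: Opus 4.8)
The plan is to combine the two equivalences established in the preliminaries: first replace $\varOmega(\momang)$ by its $T^m$-invariant subalgebra, then replace basic cohomology by $\h'$-equivariant cohomology of the invariant forms, and finally identify the latter with $H(\NB)$ via the Cartan model. Concretely, by Lemma~\ref{invariant} applied to $M=\momang$ with $G=T^m$ and the subspace $\h'\subset\g=\R^m$ (the $H'$-action is almost free by Construction~\ref{realfol}), the inclusion $\varOmega(\momang)^{T^m}_{\mathrm{bas}\,\h'}\hookrightarrow\varOmega(\momang)_{\mathrm{bas}\,\h'}$ is a quasi-isomorphism, so
\[
  \hb^*(\momang)=H_{\mathrm{bas}\,\h'}\bigl(\varOmega(\momang)\bigr)\cong H_{\mathrm{bas}\,\h'}\bigl(\varOmega(\momang)^{T^m}\bigr).
\]

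Next I would invoke the fact, recalled in the preliminaries, that $\varOmega(\momang)^{T^m}$ is a $\W(\h')^\star$-algebra (this is the content of \cite[Lemma~4.4]{ishida2018transverse}, cited just before Lemma~\ref{invariant}). For any $\W^\star$-algebra $B$ one has $H_\mathrm{bas}(B)\cong H_{\h'}(B)$, and the Cartan model computes $H_{\h'}(B)$. Applying this with $B=\varOmega(\momang)^{T^m}$ gives
\[
  H_{\mathrm{bas}\,\h'}\bigl(\varOmega(\momang)^{T^m}\bigr)\cong H\bigl(\CM_{\h'}(\varOmega(\momang)^{T^m})\bigr)=H(\NB),
\]
where the last equality is just the definition of $\NB$. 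Composing the two displayed isomorphisms yields $\hb^*(\momang)\cong H(\NB)$, as claimed.

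The one point that deserves care — and which I expect to be the main obstacle — is checking that the $\W(\h')^\star$-structure on $\varOmega(\momang)^{T^m}$ is the one for which the general machinery of \cite{guillemin2013supersymmetry} (giving $H_\mathrm{bas}(B)\cong H_{\h'}(B)$ for $\W^\star$-algebras) applies, and that it is compatible with the $\g^\star$-structure used in Lemma~\ref{invariant}. In particular, one must make sure the operators $\iota_\xi$, $L_\xi$ for $\xi\in\h'$ restrict correctly to the $T^m$-invariant forms and that the connection-type element built in \cite{ishida2018transverse} lands in $\varOmega(\momang)^{T^m}$; this is where the commutativity of $T^m$ and the fact that $\h'\subset\g$ is used. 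Once these structural compatibilities are in place, the proof is a formal concatenation of the two quasi-isomorphisms above, so I would keep the write-up short: state the application of Lemma~\ref{invariant}, then cite the $\W^\star$-algebra property and the Mathai--Quillen/Cartan identification, and conclude.
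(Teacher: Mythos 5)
Your proposal is correct and follows essentially the same route as the paper: the paper's proof likewise applies Lemma~\ref{invariant} to pass from $\varOmega(\momang)$ to $\varOmega(\momang)^{T^m}$, and then uses the $\W(\h')^\star$-algebra structure of $\varOmega(\momang)^{T^m}$ (from \cite[Lemma~4.4]{ishida2018transverse}) to identify $H_{\mathrm{bas}\,\h'}\bigl(\varOmega(\momang)^{T^m}\bigr)$ with the cohomology of the Cartan model $\NB$. The compatibility point you flag is exactly what the citation of \cite[Lemma~4.4]{ishida2018transverse} in the preliminaries is meant to cover, so no further work is needed.
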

\begin{proof}
Applying the quasi-isomorphism of Lemma \ref{invariant} to the $\W(\h')^\star$-algebras $\varOmega (\momang)$ and $\varOmega (\momang)^{T^m}$ we obtain
\[
  \hb^*(\momang)=H_{\mathrm{bas}\,\h'}\bigl(\varOmega(\momang)\bigr)\cong
  H_{\mathrm{bas}\,\h'}\bigl(\varOmega(\momang)^{T^m}\bigr)=
  H\bigl(\CM_{\h' }\bigl(\varOmega (\momang)^{T^m}\bigr)\bigr)=H(\mathcal N).\qedhere
\]
\end{proof}

Recall that a DGA $B$ is called {\it formal} if it is weak equivalent to its cohomology algebra: $(B, d_B) \simeq (H^*(B, d_B), 0)$. (A \emph{weak equivalence} is the equivalence relation generated by quasi-isomorphisms; it may be realised not by a single quasi-isomorphism of DGAs, but rather by a zigzag of quasi-isomorphisms.)

%The key result is the formality of the algebra $C_{T^m}(\FoM)$:
As $T^m$ is compact, cohomology of the Cartan model
\[
  \CM_{\textgoth{t}}(\varOmega(\momang))=\bigl(S(\textgoth{t}^*)\otimes
  \varOmega(\momang)^{T^m},\,d_{\textgoth{t}}\bigr)
\]
is the equivariant cohomology~$H^*_{T^m}(\momang)$, which is a module over $S(\textgoth{t}^*)=H^*_{T^m}(\mathit{pt})=H^*(BT^m)$.

\begin{lemma} \label{formality}
The algebra $\CM_{\textgoth{t}}(\varOmega(\momang))$ is formal. Furthermore, there is a zigzag of quasi-isomorphisms of DGAs between 
$\CM_{\textgoth{t}}(\varOmega(\momang))$ and $H_{T^m}(\momang)$ which respect the $S(\textgoth{t}^*)$-module structure.
\end{lemma}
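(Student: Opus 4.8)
The plan is to prove formality of $\CM_{\textgoth{t}}(\varOmega(\momang))$ by identifying the equivariant cohomology $H^*_{T^m}(\momang)$ explicitly and then exhibiting a ring map realizing this identification through a zigzag of quasi-isomorphisms of $S(\textgoth{t}^*)$-DGAs. Recall that $\momang$ is $T^m$-equivariantly homotopy equivalent to $U(\SK)$, and the Borel construction $ET^m\times_{T^m}\momang$ is homotopy equivalent to the Davis--Januszkiewicz space, whose cohomology is the Stanley--Reisner ring: $H^*_{T^m}(\momang)\cong \Z[v_1,\dots,v_m]/I_\SK$ with $\deg v_i=2$ and $I_\SK$ generated by the square-free monomials $v_{i_1}\cdots v_{i_k}$ for $\{i_1,\dots,i_k\}\notin\SK$. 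This is an $S(\textgoth{t}^*)=\Z[v_1,\dots,v_m]$-algebra in the obvious way. So the target of the desired zigzag is the intrinsically formal DGA $(\Z[v_1,\dots,v_m]/I_\SK,0)$.

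First I would construct an explicit model for the Borel fibration. Using the polydisc decomposition $\momang=\bigcup_{I\in\SK}\bigl(\prod_{i\in I}\mathbb D\times\prod_{i\notin I}\mathbb S\bigr)$, one obtains $ET^m\times_{T^m}\momang=\bigcup_{I\in\SK}\bigl(\prod_{i\in I}(ES^1\times_{S^1}\mathbb D)\times\prod_{i\notin I}(ES^1\times_{S^1}\mathbb S)\bigr)$, and since $\mathbb D$ is $S^1$-equivariantly contractible to the origin while $\mathbb S$ is a free orbit, each factor is homotopy equivalent to $BS^1=\mathbb{CP}^\infty$ or to a point. This realizes the Davis--Januszkiewicz space as a polyhedral product $(\mathbb{CP}^\infty,\mathit{pt})^\SK$, whose cohomology is $\Z[v_1,\dots,v_m]/I_\SK$; the vanishing of the relations follows because the corresponding product of subspaces is empty precisely when the index set is not a simplex. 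The projection to $BT^m=(\mathbb{CP}^\infty)^m$ induces the algebra map $\Z[v_1,\dots,v_m]\to\Z[v_1,\dots,v_m]/I_\SK$, so the $S(\textgoth{t}^*)$-module structure is exactly the one above.

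Next I would pass from this topological statement to a statement about the Cartan DGA. The Cartan model $\CM_{\textgoth{t}}(\varOmega(\momang))$ computes $H^*_{T^m}(\momang)$ as an $S(\textgoth{t}^*)$-algebra, so it suffices to produce a zigzag of $S(\textgoth{t}^*)$-DGA quasi-isomorphisms between $\CM_{\textgoth{t}}(\varOmega(\momang))$ and $(\Z[v_1,\dots,v_m]/I_\SK,0)$ (tensored with $\R$). One route: for each maximal simplex (equivalently, for each chart of a suitable cover of $\momang$) the Cartan model is formal with cohomology a polynomial ring on the corresponding $v_i$'s, and one glues these via a Mayer--Vietoris / $\check{\mathrm C}$ech--Cartan double complex. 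A cleaner route, which I would favor, is to use the algebraic model directly: the polyhedral product structure gives a homotopy colimit decomposition of $ET^m\times_{T^m}\momang$ over the face poset of $\SK$, each term being a product of copies of $BS^1$; applying the polynomial de Rham functor $A_{PL}$ (or the real forms functor) yields a diagram of formal $S(\textgoth{t}^*)$-DGAs, and the homotopy colimit of formal DGAs with polynomial cohomology along such a diagram is formal, with cohomology the limit $\Z[v_1,\dots,v_m]/I_\SK$. Comparing $A_{PL}$ of the Borel construction with the Cartan model over $\R$ gives the zigzag, and all maps in sight are $S(\textgoth{t}^*)$-linear because they are induced by the common map to $BT^m$.

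The main obstacle I anticipate is the bookkeeping needed to keep every quasi-isomorphism in the zigzag compatible with the $S(\textgoth{t}^*)$-module structure, rather than merely the ring structure: the Cartan model, the Weil model, the Mathai--Quillen isomorphism, and the polyhedral-product / homotopy-colimit comparison each carry an $S(\textgoth{t}^*)$-action, but one must check that the comparison maps (in particular the Mathai--Quillen automorphism and the Čech-to-global step) respect it. A secondary technical point is ensuring the polyhedral product decomposition is compatible with the smooth Cartan model $\varOmega(\momang)^{T^m}$, i.e.\ passing between the topological homotopy colimit and a de Rham-theoretic one; this is where one uses that $\momang$ is a manifold with a good $T^m$-invariant cover whose pieces are $T^m$-equivariantly of the polydisc type, so that on each piece the Cartan model collapses to its cohomology. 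Once these compatibilities are in place, formality of $\CM_{\textgoth{t}}(\varOmega(\momang))$ as an $S(\textgoth{t}^*)$-DGA follows, and this is exactly what is needed to run the Eilenberg--Moore spectral sequence argument for $\NB$ in the proof of Theorem~\ref{basicTh}.
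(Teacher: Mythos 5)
Your proposal is correct and follows essentially the same route as the paper: both identify the Borel construction with the polyhedral product $(\mathbb{C}P^{\infty})^{\SK}$ and invoke its rational formality (the Notbohm--Ray theorem, which is exactly the homotopy-colimit argument you sketch), and both then need an $S(\textgoth{t}^*)$-linear zigzag between the Cartan model and differential forms on the Borel construction. The one step you defer as an ``anticipated obstacle'' is precisely what the paper's proof supplies explicitly, via a commutative diagram passing through the Weil model $(\varOmega(\momang)^{T^m}\otimes\W)_{\mathrm{bas}}$ and the forms on $E=EU(m)$, using Cartan's theorem, the inclusion of the Weil algebra into $\varOmega(E)$, and the Chern--Weil homomorphism to keep track of the $S(\textgoth{t}^*)$-module structure.
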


\begin{proof}
In this proof, $\W$ is the Weil algebra $\W(\textgoth{t})$ of the torus $T^m$.
Let $E=\mathit{EU}(m)$ be the space of orthonormal $m$-frames in $\mathbb{C}^{\infty}$. Let $\varOmega(E)$ be the inverse limit of the algebras of differential forms on the smooth manifolds of $m$-frames in $\C^N$.
We consider the commutative diagram
\[
\begin{tikzcd}
     & \FoMin \otimes \W\arrow[hookrightarrow]{r}[swap]{\iota} & \FoMin \otimes \varOmega({E}) & \\
     \CM_{\textgoth{t}}(\FoM) & (\FoMin \otimes \W)_{\mathrm{bas}}\arrow{l}{\varphi}[swap]{\simeq}\arrow[hookrightarrow]{u}\arrow[hookrightarrow]{r}{\simeq}[swap]{\iota_{\mathrm{bas}}} & (\FoMin \otimes \varOmega({E}))_{\mathrm{bas}}\arrow{r}{\cong}[swap]{\psi}\arrow[hookrightarrow]{u} & \varOmega(\momang \times_{T^m} {E}) \\
     S(\textgoth{t}^*)\arrow[hookrightarrow]{u}\arrow{r}{\cong} &  \W_{\mathrm{bas}}\arrow[hookrightarrow]{u}\arrow[hookrightarrow]{r}{\simeq} & \varOmega({E})_{\mathrm{bas}}\arrow[hookrightarrow]{u}\arrow{r}{\cong} & \varOmega(BT^m)\arrow[hookrightarrow]{u}
\end{tikzcd}
\]
Here $\iota$ and $\iota_{\mathrm{bas}}$ are the quasi-isomorphisms induced by the inclusion $\W\hookrightarrow\varOmega( E)$ of a free acyclic $\W^\star$-algebra (see~\cite[Proposition~2.5.4 and \S4.4]{guillemin2013supersymmetry}), and the restriction $\W_{\mathrm{bas}}\hookrightarrow\varOmega(E)_{\mathrm{bas}}$ is the Chern--Weil homomorphism. The quasi-isomorphism $\varphi$ is given by Cartan's Theorem (see~\cite[Theorem~4.2.1]{guillemin2013supersymmetry}). The isomorphism $\psi$ follows from the fact that $T^m$ acts freely on~$E$.

The middle line of the diagram above gives a zigzag of quasi-isomorphisms between $\CM_{\textgoth{t}}(\FoM)$ and $\varOmega(\momang \times_{T^m} {E})$ which respect the $S(\textgoth{t}^*)$-module structure.

Now, the Borel construction $\momang \times_{T^m} {E}$ is homotopy equivalent to the polyhedral product $(\mathbb{C}P^{\infty})^{\SK}$, which is a rationally formal space by \cite[Theorem~4.8]{notbohm2005davis} or~\cite[Theorem~8.1.6, Corollary~8.1.7]{buchstaber2015toric}. Rational formality implies a zigzag of quasi-isomorphisms between $\varOmega(\momang \times_{T^m} {E})$ and $H^*_{T^m}(\momang)=H^*(\momang \times_{T^m} {E})$, as the de Rham forms $\varOmega(\momang \times_{T^m} {E})$ is a commutative cochain model. This zigzag can be chosen to respect the $H^*(BT^m)$-module structure (see~\cite[8.1.11--8.1.12]{buchstaber2015toric}).
\end{proof}

We have the following extended functoriality property of $\Tor$ in the category of DGAs, which is a standard corollary of the Eilenberg-Moore spectral sequence:

\begin{lemma}[{\cite[Corollary 1.3]{smith1967homological}}]\label{Eilenberg-Moore}
Let $A$ and $B$ be DGAs, let $L,L'$ be a pair of $A$-modules and let $M,M'$ be a pair of $B$-modules given together with morphisms 
\[
  f\colon A\to B,\quad g\colon L\to M,\quad g'\colon L'\to M'
\]
where $g$ and $g'$ are $f$-linear. If $f$, $g$ and $g'$ are quasi-isomorphisms, then
\[
  \Tor_f(g,g')\colon\Tor_A(L,L')\to\Tor_B(M,M')
\]
is an isomorphism.
\end{lemma}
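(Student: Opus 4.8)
The plan is to reduce the statement to the standard fact that $\Tor$ of DGAs, computed by the two-sided bar construction, is a homotopy functor, and then argue by a spectral-sequence comparison. For a DGA $A$ and differential graded $A$-modules $L,L'$, consider the two-sided bar construction $\mathrm B(L,A,L')=\bigoplus_{n\ge 0}L\otimes(sA)^{\otimes n}\otimes L'$ (here $s$ is the suspension), with the bar differential---the sum of the internal differential built from $d_L,d_A,d_{L'}$ and the external differential contracting adjacent factors via the products $L\otimes A\to L$, $A\otimes A\to A$, $A\otimes L'\to L'$. This is a model for $\Tor_A(L,L')$: the augmentation $\mathrm B(A,A,L')\xrightarrow{\ \sim\ }L'$ is a quasi-isomorphism (it admits an explicit extra-degeneracy contraction), $\mathrm B(A,A,L')$ is semifree over $A$, and $L\otimes_A\mathrm B(A,A,L')=\mathrm B(L,A,L')$. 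All of this is natural in the triple $(L,A,L')$, so $f,g,g'$ induce a morphism of complexes $\mathrm B(g,f,g')\colon\mathrm B(L,A,L')\to\mathrm B(M,B,M')$ whose effect on homology is precisely $\Tor_f(g,g')$; it therefore suffices to prove that $\mathrm B(g,f,g')$ is a quasi-isomorphism.

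First I would filter both bar constructions by bar length, $F_p=\bigoplus_{n\le p}(\,\cdots\,)$; this is an increasing, exhaustive, bounded-below filtration compatible with $\mathrm B(g,f,g')$, giving a map of the associated Eilenberg--Moore spectral sequences. On $E_0$ only the internal differential survives, so---working over the ground field $\R$, where homology commutes with tensor products---one gets $E_1\cong\bigoplus_n H(L)\otimes(sH(A))^{\otimes n}\otimes H(L')$ with $d_1$ the bar differential of the graded algebra $H(A)$, i.e.\ $E_2\cong\Tor_{H(A)}(H(L),H(L'))$. The map induced by $(g,f,g')$ on $E_1$ is $H(g)\otimes(sH(f))^{\otimes n}\otimes H(g')$; since $f,g,g'$ are quasi-isomorphisms, $H(f),H(g),H(g')$ are isomorphisms, so the comparison map is already an isomorphism on $E_1$, hence on every later page. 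Provided both spectral sequences converge to $H(\mathrm B(L,A,L'))=\Tor_A(L,L')$ and $H(\mathrm B(M,B,M'))=\Tor_B(M,M')$, the comparison theorem for spectral sequences yields that $\Tor_f(g,g')$ is an isomorphism.

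The only genuine subtlety, and the point I expect to need the most care, is convergence: the bar-length filtration is unbounded above, so strong convergence requires an extra hypothesis---in practice that the DGAs are cohomologically connected (non-negatively graded with degree-zero part $\R$), which makes the filtration finite in each total degree. This holds in all the applications in this paper, where $A$ and $B$ range over $S(\textgoth t^*)$, $\W(\textgoth t)$, $\varOmega(BT^m)$, $H^*_{T^m}(\momang)$, and related connected DGAs. Alternatively---and this is how I would write it to sidestep convergence bookkeeping altogether---one can bypass the spectral sequence: choose a semifree resolution $P\xrightarrow{\ \sim\ }L'$ over $A$; then $B\otimes_A P$ is semifree over $B$, and since $f$ is a quasi-isomorphism one has $B\otimes_A^{\mathbf L}f^*M'\simeq M'$, so $B\otimes_A P\xrightarrow{\ \sim\ }M'$ is a semifree resolution of $M'$ over $B$. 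Hence $\Tor_A(L,L')=H(L\otimes_A P)$ and $\Tor_B(M,M')=H(f^*M\otimes_A P)$, and $\Tor_f(g,g')$ is induced by $g\otimes_A\id_P\colon L\otimes_A P\to f^*M\otimes_A P$; as $P$ is semifree over $A$, tensoring with $P$ preserves quasi-isomorphisms of $A$-modules, so $g$ being a quasi-isomorphism forces $g\otimes_A\id_P$ to be one, and we are done. Either way the argument is formal; the content, due to Smith, is just that the bar construction makes $\Tor$ of DGAs a well-defined homotopy functor.
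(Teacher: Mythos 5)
The paper offers no proof of this lemma at all --- it is quoted verbatim as \cite[Corollary 1.3]{smith1967homological} --- and your argument via the two-sided bar construction, the bar-length filtration, and the comparison of Eilenberg--Moore spectral sequences is exactly the content of that reference, so you have correctly reconstructed the cited proof. Your flagging of convergence as the one real hypothesis (connectivity of the DGAs, which holds for $S(\textgoth{t}^*)$, $H^*_{T^m}(\momang)$ and the other algebras to which the lemma is applied here) is the right caveat, and the alternative semifree-resolution argument is a sound way to sidestep it.
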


Given a commutative ring $R$ with unit, the \emph{face ring} (or the \emph{Stanley--Reisner ring}) of a simplicial complex $\SK$ is
\[
 R[\SK]:=R[v_1,...,v_m]/I_{\SK},
\]
where $R[v_1,...,v_m]$ is the polynomial algebra, and $I_{\SK}$ is the \emph{Stanley--Reisner ideal}, generated by those square-free monomials $v_I=\prod_{i\in I}v_i$ for which $I\not\in\SK$.  We make 
$R[\SK]$ a graded ring by setting $\deg v_i=2$ for $i=1,\ldots,m$.

Now we are ready to prove the main result of this section:

\begin{thm}\label{basicTh}
There is an isomorphism of algebras:
\[
  \hb^*(\momang) \cong \mathbb{R}[v_1,\ldots,v_m] /(I_{\SK}+J),
\]
where $I_\SK$ is the Stanley--Reisner ideal of~$\SK$, generated by the monomials 
\[
  v_{i_1}\cdots v_{i_k}\quad\text{with }\{i_1,\ldots,i_k\}\notin\SK,
\]  
and $J$ is the ideal generated by the linear forms
\[
  \sum_{i=1}^m \langle \mb u, q(\mb e_i) \rangle v_i\quad
  \text{with }\mb u\in (\textgoth{t}/\textgoth{h}')^*.
\]
Here $q\colon\mathfrak t\to\mathfrak  t/ \h'$ is the projection, and $\mathfrak t=\R^m$.
\end{thm}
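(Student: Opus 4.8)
The plan is to compute $H(\NB)$ using Lemma~\ref{cohN}, and then combine Lemma~\ref{formality} with the Eilenberg--Moore machinery of Lemma~\ref{Eilenberg-Moore}. The first observation is that the Cartan model $\NB=\CM_{\h'}(\varOmega(\momang)^{T^m})$ can be rewritten as a relative tensor product, or rather as a $\Tor$: since $\h'\subset\textgoth{t}$ with quotient $\textgoth{t}/\h'$, we have a surjection $S(\textgoth{t}^*)\to S(\h'^*)$, and the Cartan differential on $S(\h'^*)\otimes\varOmega(\momang)^{T^m}$ is obtained from the Cartan differential on $S(\textgoth{t}^*)\otimes\varOmega(\momang)^{T^m}=\CM_{\textgoth{t}}(\varOmega(\momang))$ by base change along $S(\textgoth{t}^*)\to S(\h'^*)$. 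Concretely, I want to identify
\[
  \NB \;\simeq\; \CM_{\textgoth{t}}(\varOmega(\momang)) \otimes_{S(\textgoth{t}^*)} S(\h'^*),
\]
and, because $S(\h'^*)$ is a free (polynomial) quotient of $S(\textgoth{t}^*)$ by a regular sequence of linear forms — namely the forms dual to a complement of $\h'$ in $\textgoth{t}$ — this derived base change is an honest tensor product up to quasi-isomorphism, i.e. $H(\NB)\cong\Tor_{S(\textgoth{t}^*)}(H^*_{T^m}(\momang),S(\h'^*))$ provided the $\Tor$ collapses. One must be slightly careful here: in general $\Tor$ over a polynomial ring against a Koszul quotient need not be concentrated in homological degree $0$; concentration is equivalent to $H^*_{T^m}(\momang)$ being a free (or at least flat) module over the subalgebra generated by those linear forms, which should follow since $H^*_{T^m}(\momang)\cong\R[\SK]$ is Cohen--Macaulay and the chosen linear forms will be a regular sequence after a generic enough choice — but note that the statement does not require genericity, so I will need the cleaner argument below.

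Here is the cleaner route, which avoids Cohen--Macaulayness subtleties. By Lemma~\ref{formality} there is a zigzag of $S(\textgoth{t}^*)$-linear quasi-isomorphisms between $\CM_{\textgoth{t}}(\varOmega(\momang))$ and $H^*_{T^m}(\momang)$. Now apply Lemma~\ref{Eilenberg-Moore} with $A=B=S(\textgoth{t}^*)$ (or rather with $A$ the DGA $\CM_{\textgoth{t}}(\varOmega(\mathit{pt}))$ and $B=S(\textgoth{t}^*)$, connected by the formality quasi-isomorphism for a point), with $L=\CM_{\textgoth{t}}(\varOmega(\momang))$, $M=H^*_{T^m}(\momang)$, and $L'=M'=S(\h'^*)$ viewed as an $S(\textgoth{t}^*)$-module via $q^*$. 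Since all the vertical maps in the formality zigzag are quasi-isomorphisms and $S(\textgoth{t}^*)$-linear, Lemma~\ref{Eilenberg-Moore} (iterated over the zigzag) gives
\[
  H\bigl(\Tor_{S(\textgoth{t}^*)}(\CM_{\textgoth{t}}(\varOmega(\momang)),S(\h'^*))\bigr)
  \;\cong\;
  \Tor_{S(\textgoth{t}^*)}\bigl(H^*_{T^m}(\momang),S(\h'^*)\bigr).
\]
Then I must (i) identify the left-hand side with $H(\NB)$ — this is the statement that the Cartan model $\NB$ for $\h'$ computes the derived tensor product of the Cartan model for $\textgoth{t}$ against $S(\h'^*)$, which is essentially the definition of the differential $d_{\h'}$ as a twisted/Koszul differential — and (ii) compute the right-hand side. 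For (ii): $H^*_{T^m}(\momang)\cong\R[\SK]=\R[v_1,\dots,v_m]/I_{\SK}$ with $v_i$ of degree $2$, and the $S(\textgoth{t}^*)$-module structure is the standard one in which a generator $u\in\textgoth{t}^*$ of $S(\textgoth{t}^*)=H^*(BT^m)$ acts by multiplication by $\sum_{i=1}^m\langle u,\mb e_i\rangle v_i$ (here $\mb e_i$ is the $i$-th basis vector of $\textgoth{t}=\R^m$, pulled back from the $i$-th $\C P^\infty$ factor of $(\C P^\infty)^{\SK}$). The module $S(\h'^*)=S(\textgoth{t}^*)/(\h'^\perp\cdot S(\textgoth{t}^*))$, where $\h'^\perp=(\textgoth{t}/\h')^*\subset\textgoth{t}^*$ is spanned by the linear forms $\mb u$ appearing in $J$; these forms act on $\R[\SK]$ exactly by the linear elements $\sum_i\langle\mb u,q(\mb e_i)\rangle v_i$ generating $J$.

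The final computation is the standard one: I claim $\Tor_{S(\textgoth{t}^*)}(\R[\SK],S(\textgoth{t}^*)/(\h'^\perp))$ is concentrated in $\Tor^0$ and equals $\R[\SK]/J$. Resolve $S(\textgoth{t}^*)/(\h'^\perp)$ by the Koszul complex on a basis $w_1,\dots,w_{m-n}$ of $\h'^\perp$; tensoring with $\R[\SK]$ gives the Koszul complex $\Lambda[\,\cdot\,]\otimes\R[\SK]$ on the images of the $w_j$, i.e. on the linear forms $\ell_j=\sum_i\langle w_j,q(\mb e_i)\rangle v_i\in\R[\SK]$. By the Danilov--Jurkiewicz theorem (or directly, since $\R[\SK]$ is the cohomology of the polyhedral product $(\C P^\infty)^{\SK}$ and $\Sigma$ is a complete simplicial fan), the sequence $\ell_1,\dots,\ell_{m-n}$ is a \emph{regular sequence} on $\R[\SK]$ — this uses exactly that $a_1,\dots,a_m=q(\mb e_1),\dots,q(\mb e_m)$ are the generators of the rays of a complete simplicial fan in the $n$-dimensional $\textgoth{t}/\h'$, so $q$ restricted to each simplex cone is injective, which is the combinatorial condition making a linear system of parameters. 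Hence the Koszul homology vanishes above degree $0$ and $\Tor^0=\R[\SK]/(\ell_1,\dots,\ell_{m-n})=\R[\SK]/J=\R[v_1,\dots,v_m]/(I_{\SK}+J)$, which is the claimed answer. \textbf{Main obstacle.} The genuinely delicate point is step (i) together with the bookkeeping of Lemma~\ref{Eilenberg-Moore}: I need to set up $\NB$ honestly as a (derived) tensor product $\CM_{\textgoth{t}}(\varOmega(\momang))\otimes^{\mathbf L}_{S(\textgoth{t}^*)}S(\h'^*)$, check that the Cartan differential $d_{\h'}$ agrees with the total differential of that construction, and verify that the formality zigzag of Lemma~\ref{formality} consists of maps to which Lemma~\ref{Eilenberg-Moore} genuinely applies (DGA morphisms and module maps over a \emph{fixed} DGA, with all three maps quasi-isomorphisms at each stage of the zigzag). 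The regularity of the sequence $\ell_1,\dots,\ell_{m-n}$ — i.e. that these linear forms form a linear system of parameters for $\R[\SK]$ — is the other input that must be stated carefully, but it is classical for complete simplicial fans and does not depend on rationality.
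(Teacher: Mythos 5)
Your proposal is correct and is essentially the paper's argument: formality of the Cartan model (Lemma~\ref{formality}) fed into the extended functoriality of $\Tor$ (Lemma~\ref{Eilenberg-Moore}), with the collapse of the Eilenberg--Moore spectral sequence coming from the fact that the linear forms generating $J$ are a linear system of parameters, hence a regular sequence, on the Cohen--Macaulay ring $\R[\SK]$. The only real difference is bookkeeping: the paper splits $\textgoth{t}\cong\textgoth{g}'\oplus\h'$ with $\textgoth{g}'=\textgoth{t}/\h'$ and computes $\Tor_{S(\textgoth{g}'^*)}(\R,-)$ instead of your $\Tor_{S(\textgoth{t}^*)}(-,S(\h'^*))$ (the two agree by flat base change and give the same Koszul complex), and it handles your flagged ``main obstacle'' (i) not by setting $\NB$ up as a derived tensor product, but by evaluating the single object $\Tor_{S(\textgoth{g}'^*)}\bigl(\R, S(\textgoth{g}'^*)\otimes\NB\bigr)$ twice --- once through formality to get $\R[v_1,\dots,v_m]/(I_{\SK}+J)$, and once, using the now-known vanishing of higher $\Tor$ together with the K\"unneth theorem, to get $H(\NB)\cong\hb^*(\momang)$. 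One slip to correct: $\h'^{\perp}=(\textgoth{t}/\h')^*$ has dimension $n$, not $m-n$, so your Koszul complex and regular sequence consist of $n$ linear forms, matching the Krull dimension of $\R[\SK]$.
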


\begin{proof}
Denote $\textgoth{g}':=\textgoth{t}/\textgoth{h}'$. We have a splitting $\textgoth{t}\cong\textgoth{g}'\oplus \textgoth{h}'$. Hence, $S({\textgoth{t}}^*)\cong S(\textgoth{g}'^*)\otimes S(\textgoth{h}'^*)$, and $S({\textgoth{t}}^*)$ is an $S(\textgoth{g}'^*)$-module via the linear monomorphism $q^*\colon \textgoth{g}'^*\to {\textgoth{t}}^*$. We also obtain a DGA isomorphism 
\begin{equation}\label{CTmsplit}
  \CM_{\textgoth{t}}(\FoM) \cong S(\textgoth{g}'^*) \otimes \NB,
\end{equation}  
where $\NB=\CM_{\h' }\bigl(\varOmega (\momang)^{T^m}\bigr)$ (see Lemma~\ref{cohN}) and the right hand side is understood as the Cartan model of $\NB$ with respect to the Lie algebra $\textgoth{g}'$.

Recall that $H^*_{T^m}(\momang) \cong \mathbb{R}[\SK]$ (see \cite[Corollary~3.3.1]{buchstaber2000torus}). Since the fan $\Sigma = q(\Sigma_\SK)$ is complete, the Stanley--Reisner ring $\R[\SK]$ is Cohen--Macaulay, that is, it is a finitely-generated free module over its polynomial subalgebra. Furthermore, the composite $\textgoth{g}'^*\hookrightarrow {\textgoth{t}}^*\to\textgoth{t}^*_I$ is onto for any $I\in\SK$, where $\textgoth{t}_I$ is the coordinate subspace generated by all $\mb e_i$ with $i \in I$. Therefore, the criterion \cite[Lemma 3.3.1]{buchstaber2015toric} applies to show that $\mathbb{R}[\SK]$ is a finitely generated free module over $S(\textgoth{g}'^*)$.

Consider the following pushout diagram of DGAs:
\[
\begin{tikzpicture}
  \matrix (m) [matrix of math nodes,row sep=3em,column sep=4em,minimum width=2em]
  {
     \NB \cong \mathbb{R} \otimes_{S^*(\textgoth{g}'^*)} \CM_{\textgoth{t}}(\FoM) \, \, \, & \, \, \, \, \, \, \CM_{\textgoth{t}}(\FoM) \cong S(\textgoth{g}'^*)   \otimes \NB  \\
     \mathbb{R} & S(\textgoth{g}'^*) \\};
  \path[-stealth]
    (m-2-2) edge node [below] {$f^*$} (m-2-1)
            edge  node [right] {$\pi^*$} (m-1-2)
    (m-2-1) edge node [left] {$\tilde{\pi}^*$} (m-1-1)
    (m-1-2) edge node [above] {$\tilde{f}^*$} (m-1-1);
  \end{tikzpicture}
\]
where the morphisms are given by
\[ 
f^*\colon p \mapsto p(0),\quad
\pi^*\colon p \mapsto p \otimes 1,\quad
\tilde{f}^*\colon \omega \mapsto 1 \otimes \omega,\quad
\tilde{\pi}^*\colon c \mapsto c \otimes 1.
\]
We have a sequence of algebra isomorphisms:
\begin{multline}\label{TorR}
  \Tor_{S(\textgoth{g}'^*)}\bigl(\mathbb{R},S^*(\textgoth{g}'^*)\otimes \NB\bigr)
  \cong\Tor_{S(\textgoth{g}'^*)}\bigl(\mathbb{R},\CM_{\textgoth{t}}(\FoM)\bigr)
  \cong\Tor_{S(\textgoth{g}'^*)}\bigl(\mathbb{R},H^*_{T^m}(\momang)\bigr)\\  
  \cong \Tor^0_{S(\textgoth{g}'^*)}\bigl(\mathbb{R},H^*_{T^m}(\momang)\bigr)
  \cong \R \otimes_{S(\textgoth{g}'^*)} H^*_{T^m}(\momang) \cong H^*_{T^m}(\momang) / S^+(\textgoth{g}'^*)\\     \cong \mathbb{R}[v_1,...,v_m] /(I_{\SK}+J).
\end{multline}
The first isomorphism follows from~\eqref{CTmsplit}.
The second isomorphism follows from Lemma~\ref{formality} and Lemma~\ref{Eilenberg-Moore}. In the third isomorphism, the higher $\Tor$ vanish because $H^*_{T^m}(\momang)$ is a free module over $S^*(\textgoth{g}'^*)$. The fourth and fifth isomorphisms are clear. For the last isomorphism, recall that $q \colon\textgoth{t}\to\textgoth{t}/\textgoth{h}'=\textgoth{g}'$ is the quotient projection, so that %follows from the fact that $S(\textgoth{g}'^*)$ is generated by $S^2(\textgoth{g}'^*)=\textgoth{g}'^*=A^*((\textgoth{g}/\textgoth{h}_A)^*)$ 
$q^*(\mb u)=\sum_{i=1}^m \langle \mb u,\mb a_i \rangle v_i$ for any $\mb u\in\textgoth{g}'^*$.

On the other hand, we have a sequence of isomorphisms
\begin{multline}\label{TorL}
  \Tor_{S(\textgoth{g}'^*)}\bigl(\mathbb{R}, S(\textgoth{g}'^*) \otimes\NB\bigr) 
  \cong \Tor_{S(\textgoth{g}'^*)}^0\bigl(\mathbb{R}, S(\textgoth{g}'^*) \otimes\NB\bigr)  
  \cong H(\mathbb{R}) \otimes_{S(\textgoth{g}'^*)} H\bigl(S(\textgoth{g}'^*) \otimes \NB\bigr) \\
  \cong H\bigl(\R \otimes_{S(\textgoth{g}'^*)} (S(\textgoth{g}'^*) \otimes \NB)\bigr)
  \cong H(\NB)\cong \hb^*(\momang).
\end{multline}
For the first isomorphism, the higher $\Tor$ vanish by~\eqref{TorR}. The second isomorphism is by definition of $\Tor^0$. The third isomorphism follows from the K\"unneth Theorem, since
$H\bigl(S(\textgoth{g}'^*) \otimes \NB\bigr)=H^*_{T^m}(\momang)$ is a free module over $S^*(\textgoth{g}'^*)$.
The fourth isomorphism is clear. The last isomorphism is Lemma~\ref{cohN}.

The theorem follows from~\eqref{TorR} and~\eqref{TorL}. 
\end{proof}

\section{Complex moment-angle manifolds and maximal torus actions}

We consider moment-angle manifolds $\momang$ with a $T^m$-invariant complex structure. The necessary and sufficient conditions for the existence of such a structure were established in~\cite{panov2012complex} and~\cite{ishida2013complex}. Namely, there is the following holomorphic version of Construction~\ref{realfol} defining a complex structure and a holomorphic foliation 
on~$\momang$.

\begin{constr}[Complex structure on~$\momang$]\label{czk}
Assume that $\dim\momang=m+n$ is even; this can always be achieved by adding ghost vertices to~$\SK$.
A $T^m$-invariant complex structure on $\momang$ is defined by two pieces of data:
\begin{itemize}
\item[--] a complete simplicial fan $\Sigma=\{\SK;a_1,\ldots,a_m\}$ in $V$ with underlying simplicial complex $\SK$ and fixed generators $a_1,\ldots,a_m$ of one-dimensional cones (a \emph{marked fan});

\item[--] a choice of a complex structure on the kernel of the linear map~$q\colon\R^m\to V$, see~\eqref{qmap}.
\end{itemize}

A choice of a complex structure on $\Ker q$ is equivalent to a choice of an $\frac{m-n}2$-dimensional complex subspace $\mathfrak h\subset\C^m$ satisfying the two conditions:
\begin{itemize}
	\item[(a)] the composite $\h\hookrightarrow\C^m\stackrel{\mathrm{Re}}\longrightarrow \R^m$ is injective; 
	\item[(b)] the composite
	$\h\hookrightarrow\C^m\stackrel{\mathrm{Re}}\longrightarrow \R^m
	\stackrel q\longrightarrow V$ is zero. 
\end{itemize}
Consider the $\frac{m-n}2$-dimensional complex-analytic subgroup
\[
  H=\exp(\mathfrak h)\subset(\C^\times)^m.
\]
By \cite[Theorem~3.3]{panov2012complex}, the holomorphic action of $H$ on $U(\SK)$ is free and proper, and the complex manifold $U(\SK)/H$ is $T^m$-equivariantly homeomorphic to~$\momang$. %This defines a complex-analytic structure on any even-dimensional moment-angle manifold 
%$\momang$ such that $\SK$ is the underlying complex of a complete simplicial fan.

Conversely, assume that a moment-angle manifold $\momang$ admits a $T^m$-invariant complex structure. By~\cite[Theorem 7.9]{ishida2013complex}, the manifold $\momang$ is $T^m$-equivariantly biholomorphic to the quotient $U(\SK)/H$ as above. The marked fan $\Sigma$ and the complex subspace $\mathfrak h\subset\C^m$ are recovered as follows. The action of $T^m$ on $\momang$ extends to a holomorphic action of $(\C^\times)^m$ on~$\momang$, although the latter action is not effective. The global stabilisers subgroup (the noneffectivity kernel)
\[
  H= \{ g \in (\C^\times)^m \colon g\cdot x = x \text{ for all $x \in \momang$}\}
\]  
is a complex-analytic subgroup of $(\C^\times)^m$. The Lie algebra $\h$ of $H$ is a complex subalgebra of the Lie algebra $\C^m$ of $(\C^\times)^m$. 
By \cite[Proposition 7.8]{ishida2013complex}, it satisfies the following:
\begin{itemize}
	\item[(a)] the composite $\h\hookrightarrow\C^m\stackrel{\mathrm{Re}}\longrightarrow \R^m$ is injective; 
	\item[(b)] the quotient map $q \colon \R^m \to \R^m/\mathop{\mathrm{Re}}(\h)$ sends the fan $\Sigma_\SK$ to a complete fan $\Sigma=q(\Sigma_\SK)$ in $\R^m/\mathop{\mathrm{Re}}(\h)$. 
\end{itemize}
Here we identify $\R^m$ with the Lie algebra $\mathfrak{t}$ of~$T^m$.

It follows that a moment-angle manifold $\momang$ admits a complex structure if and only if $\SK$ is the underlying complex of a complete simplicial fan (that is, $\SK$ is a \emph{star-shaped} sphere triangulation), and a stably complex structure on such $\momang$ is defined by a choice of a complex subspace $\mathfrak h\subset\C^m$ satisfying (a) and (b) above.
\end{constr}

Now we proceed to describe the canonical holomorphic foliation on~$\momang$. 

\begin{constr}[Holomorphic foliation on $\momang$]\label{canfol}
Recall the Lie subgroup $H'\subset T^m$ and its Lie algebra $\mathfrak h'\subset \R^m= \mathfrak{t}$ from~\eqref{realsubgr}. Observe that $\mathfrak h'=\Ker q=\mathop{\mathrm{Re}}(\mathfrak h)$.
The complexification $(\mathfrak h')^\C\subset\C^m$ is $\Ker q^\C$, where $q^\C\colon\C^m\to V^\C$ is the complexification of~\eqref{qmap}.
Now define the complex $(m-n)$-dimensional Lie group
\begin{equation*}
  (H')^{\mathbb{C}}=\exp((\mathfrak h')^\C)=\exp(\Ker q^\C)\subset(\C^\times)^m.
\end{equation*}
The restriction of the $(\C^\times)^m$-action on $U(\SK)$ to $(H')^{\mathbb{C}}$ has discrete stabilisers (see~\cite[Proposition~4.2]{p-u-v16} or \cite[Propositions 3.3 and 5.2]{ishida2017torus}. We therefore obtain a holomorphic foliation of $U(\SK)$ by the orbits 
of~$(H')^{\mathbb{C}}$. The holomorphic foliation $\mathcal F_\Sigma$ is mapped by the quotient projection $U(\SK)\to U(\SK)/H$ to a holomorphic foliation of $\momang\cong U(\SK)/H$ by the orbits of $(H')^\C/H\cong H'$. We denote the latter foliation by $F_{\h'}$ and refer to it as the \emph{canonical holomorphic foliation} of~$\momang$.
\end{constr}

%\begin{constr}[Holomorphic foliation on $\momang$]\label{canfol}
%Define the Lie subalgebra and the corresponding Lie group
%\[\textgoth{h}'=\mathop{\mathrm{Re}}(\textgoth{h}) \subset \R^m = \textgoth{t}, \qquad %H'=\exp(\textgoth{h}') \subset T^m.\]
%The restriction of the $T^m$-action on $U(\SK)/H$  to $H'\subset T^m$ is almost free (i.\,e., all %stabiliser subgroups are discrete), see~\cite[Proposition~5.4.6]{buchstaber2015toric}. Therefore, we %obtain a smooth foliation on $\momang$ by the orbits of~$H'$. To see that this foliation is %holomorphic, let $J\in\mathop{\mathrm{End}}(\mathcal T\momang)$ be the operator of the complex %structure on $\momang$, and $u \in \textgoth{h}'$. By condition~(a) above, there exists 
%$v \in \R^m$ such that $u+iv \in \textgoth{h}$. Since $\textgoth{h}$ is a complex subspace, we have %$v-iu \in \textgoth{h}$. Hence, $v \in \textgoth{h}'$. Let $X_u$ and $X_v$ denote the fundamental %vector fields on $\momang=U(\SK)/H$ generated by $u$ and $v$, respectively. Then $X_v = JX_u$ %because $v-iu \in \textgoth{h}$. Therefore, at each point, $JX_u$ belongs to the tangent space of a %leaf of the foliation. %Thus, the tangent spaces of leaves of foliation are invariant under $J$. 
%This means that the foliation is holomorphic. 
%\end{constr}

\begin{rmk}\label{holofib}
If the subspace $\textgoth{h}' \subset \R^m$ is rational (i.\,e., generated by integer vectors),  then $H'$ is a subtorus of $T^m$ and the complete simplicial fan $\Sigma=q(\Sigma_\SK)$ is rational. 
The rational fan $\Sigma$ defines a toric variety $V_{\Sigma}=\momang/H'=U(\SK)/(H')^{\C}$.
The holomorphic foliation of $\momang$ by the orbits of $H'$ becomes a holomorphic Seifert \emph{fibration} over the toric orbifold $V_\Sigma$ with fibres compact complex tori $(H')^\C/H$ (see~\cite[Proposition~5.2]{panov2012complex}).
\end{rmk}

Complex moment-angle manifolds $\momang$ are example of \emph{complex manifolds with maximal torus action}, which we review following~\cite{ishida2013complex}.
 
Let $M$ be a connected smooth manifold equipped with an effective action of a compact torus $G$. We say that the $G$-action on $M$ is \emph{maximal} if there exists a point $x \in M$ such that $\dim G+\dim G_x = \dim M$. 
If the action of $G$ on $M$ is maximal, then we can think of $G$ as a maximal compact torus of the group of diffeomorphisms on $M$ (see \cite[Lemma 2.2]{ishida2013complex}). Examples of maximal torus actions include the half-dimensional torus action on a smooth toric variety and the $T^m$-action on a moment-angle manifold~$\momang$.
	 
Let $\mathscr{C}_1$ denote the \emph{category of complex manifolds with maximal torus actions}, with objects given by triples $(M,G,y)$, where
	\begin{itemize}
	 	\item $M$ is a compact connected complex manifold;
		\item $G$ is a compact torus acting on $M$, the $G$-action is maximal and preserves the complex structure on $M$; 
		\item $y \in M$ satisfies $G_y = \{1\}$. 
	\end{itemize}
The set of morphisms $\Hom_{\mathscr{C}_1} ((M_1,G_1,y_1), (M_2,G_2,y_2))$ consists of pairs $(f, \alpha)$, where
	\begin{itemize}
	 	\item $\alpha \colon G_1 \to G_2$ is a smooth homomorphism; 
		\item $f\colon M_1\to M_2$ is an $\alpha$-equivariant holomorphic map, i.\,e. $f(g\cdot x) = \alpha(g)\cdot f(x)$  for $x \in M_1$ and $g\in G_1$; 
		\item $f(y_1) = y_2$. 
	\end{itemize}
%Then the class $\mathscr{C}_1$ and the hom-sets $\Hom_{\mathscr{C}_1}(-,-)$ form %a category. 	 

Given a compact torus $G$, we denote by $\g$ the Lie algebra of $G$ and by $\exp_G \colon\g \to G$ the exponential map. We think of $\Ker \exp_G\subset \g$ as a lattice in~$\g$. Let $\g^\C = \g \otimes_\R \C=\g\oplus i\g$ be the complexified Lie algebra. We denote by
%and decompose $\g^\C$ as $\g^\C = (\g \otimes 1) \oplus (\g \otimes i)$. 
$p \colon \g^\C\to\g$ the first projection.

As a combinatorial counterpart of $\mathscr{C}_1$, we consider the category $\mathscr{C}_2$ with objects given by triples $(\Sigma, \h, G)$ satisfying the following:
\begin{itemize}
	 	\item $G$ is a compact torus;
		\item $\Sigma$ is a nonsingular fan in $\g$ with respect to the lattice $\Ker \exp_G$;
        \item $\h\subset\g^\C$ is a complex subspace such that the restriction $p|_{\h} \colon\h \to \g$ is injective;
    we denote by $q \colon \g \to \g/p(\h)$ be the quotient map of real vector spaces;
		\item $q(\Sigma) := \{ q(\sigma) \subset \g/p(\h)\colon 
        \sigma \in \Sigma\}$
		is a complete fan, and the map $\Sigma \to q(\Sigma)$ given by $\sigma \mapsto q(\sigma)$ is bijective. 
\end{itemize}
The morphisms $\Hom _{\mathscr{C}_2} ((\Sigma_1,\h_1,G_1), (\Sigma_2,\h_2,G_2))$ are defined as the set of smooth homomorphisms $\alpha \colon G_1 \to G_2$ with the following properties:
\begin{itemize}
	  	\item the differential $d\alpha \colon\g_1 \to \g_2$ induces a morphism of fans $\Sigma_1\to\Sigma_2$ (that is, for any $\sigma_1 \in \Sigma_1$, there exists $\sigma_2 \in \Sigma_2$ such that $d\alpha(\sigma_1) \subset \sigma_2$); 
		\item the complexified differential $d\alpha^\C \colon \g_1 ^\C \to \g_2^\C$ satisfies $d\alpha^\C (\h_1) \subset \h_2$. 
\end{itemize}
%Then the class $\mathscr{C}_2$ and the hom-sets $\Hom_{\mathscr{C}_2}(-,-)$ form %a category.  
It is proved in \cite[Theorem 8.2]{ishida2013complex} that
the categories $\mathscr{C}_1$ and $\mathscr{C}_2$ are equivalent. 

Namely, there is a functor $\mathscr{F}_1\colon \mathscr{C}_1 \to \mathscr{C}_2$
defined as follows. For $(M,G,y) \in \mathscr{C}_1$, there exists a unique $(\Sigma, \h, G) \in \mathscr{C}_2$ such that $M$ is $G$-equivariantly biholomorphic to the quotient manifold $V_\Sigma/H$, where $V_\Sigma$ is the toric variety associated with $\Sigma$ and $H$ is the subgroup of the algebraic torus $G^\C$ corresponding to $\h \subset \g^\C$. The category $\mathscr{C}_1$ fully contains LVMB manifolds and, in particlar, moment-angle manifolds with invariant complex structures (see \cite[Section 10]{ishida2013complex} for the details). 

In the opposite direction, there is a functor $\mathscr{F}_2 \colon \mathscr{C}_2 \to \mathscr{C}_1$ defined as follows. Given $(\Sigma, \h, G) \in \mathscr{C}_2$, define the manifold $M$ as the quotient $V_\Sigma/H$ with the natural $G$-action. This gives an object in~$\mathscr{C}_1$.
In particular, if $\Sigma$ is a subfan of the standard fan in $\g=\R^m$ defining the toric variety $\C^m$, then the manifold  $V_\Sigma/H$ is $G$-equivariantly homeomorphic to the moment-angle manifold $\momang$, where $\SK$ is the underlying simplicial complex of $\Sigma$. If $\Sigma$ is a subfan of the fan defining the toric variety $\C P^m$, then the corresponding manifold $V_\Sigma/H$ is an LVMB manifold.

The functors $\mathscr{F}_1 \colon \mathscr{C}_1 \to \mathscr{C}_2$ and $\mathscr{F}_2 \colon \mathscr{C}_2 \to \mathscr{C}_1$ are weak inverse to each other. 
	
Given $(M,G,y) \in \mathscr{C}_1$, we define a holomorphic foliation on $M$ as in the case of moment-angle manifolds, see Construction~\ref{canfol}. Namely, we consider the Lie subgroup $H'$ of $G$ corresponding to the Lie subalgebra $\h' := p(\h) \subset \g$. It was shown in \cite[Propositions 3.3 and 5.2]{ishida2017torus} that the action of $H'\subset G$ on $M$ is almost free. Thus we have a holomorphic foliation of $M$ by $H'$-orbits. We refer to this foliation as the \emph{canonical foliation} on $M$ (see~\cite[Section~2]{ishida2018transverse}).

\section{Transverse equivalence}\label{transverse}
	Let $(M_1,F_1)$ and $(M_2,F_2)$ be smooth manifolds with foliations $F_1$ on $M_1$ and $F_2$ on $M_2$. We say that $(M_1,F_1)$ and $(M_2,F_2)$ are \emph{transversely equivalent} if there exist a foliated manifold $(M_0,F_0)$ and surjective submersions $f _i \colon M_0 \to M_i$ for $i=1,2$ such that 
	\begin{itemize}
		\item $f_i^{-1}(x_i)$ is connected for all $x_i \in M_i$, and
		\item the preimage under $f_i$ of every leaf of $F_i$ is a leaf of $F_0$
	\end{itemize}
(see \cite[Definition 2.1]{Molino} for details). 

The important property of the transverse equivalence is that the algebra of basic differential forms is an invariant of the equivalence class:

\begin{proposition}\label{bcohinv}
If foliated manifolds $(M_1,F_1)$, $(M_2,F_2)$ are transversely equivalent via $(M_0, F_0)$ and $f_i \colon M_0 \to M_i$, then there is a DGA isomorphism $\varOmega^*_{\mathrm{bas}}(M_1)\cong \varOmega^*_{\mathrm{bas}}(M_2)$.
\end{proposition}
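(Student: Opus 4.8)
The plan is to show that each surjective submersion $f_i\colon M_0\to M_i$ with connected fibres such that preimages of leaves are leaves induces an isomorphism $f_i^*\colon\varOmega^*_{\mathrm{bas}}(M_i)\xrightarrow{\ \cong\ }\varOmega^*_{\mathrm{bas}}(M_0)$ of differential graded algebras; composing one with the inverse of the other then yields the desired isomorphism $\varOmega^*_{\mathrm{bas}}(M_1)\cong\varOmega^*_{\mathrm{bas}}(M_2)$. So it suffices to treat a single surjective submersion $f\colon (M_0,F_0)\to (M_1,F_1)$ with connected fibres and with $f^{-1}(\text{leaf})=\text{leaf}$, and prove $f^*$ restricts to an isomorphism on basic forms.

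First I would recall that a form $\alpha\in\varOmega^*(M_1)$ is basic with respect to $F_1$ iff $\iota_X\alpha=0$ and $L_X\alpha=0$ for every vector field $X$ tangent to $F_1$; equivalently $\iota_X\alpha=0=\iota_X d\alpha$ for all such $X$. The pullback $f^*$ is a DGA homomorphism, so the main points are (i) $f^*$ sends $F_1$-basic forms to $F_0$-basic forms, (ii) $f^*$ is injective on all of $\varOmega^*(M_1)$, and (iii) every $F_0$-basic form is $f^*$ of some (necessarily $F_1$-basic) form on $M_1$. For (i): since $f$ is a submersion, $df$ maps $T_xM_0$ onto $T_{f(x)}M_1$; the condition that preimages of leaves are leaves (together with the submersion and connected-fibre hypotheses) means $df$ carries $T F_0$ onto $TF_1$ and, conversely, a vector tangent to $F_1$ lifts to a vector tangent to $F_0$ (the fibres of $f$ themselves lie inside the leaves of $F_0$, since $f$ maps each $F_0$-leaf onto an $F_1$-leaf, so $\ker df\subset TF_0$). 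Hence for $X_0$ tangent to $F_0$, $df(X_0)$ is tangent to $F_1$, giving $\iota_{X_0}f^*\alpha=f^*(\iota_{df(X_0)}\alpha)=0$ and likewise for $L_{X_0}$, so $f^*\alpha$ is $F_0$-basic. Injectivity (ii) is immediate from $f$ being a surjective submersion, hence an open map with $f^*$ injective on forms.

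The substantive step is surjectivity (iii): given $\beta\in\varOmega^*_{\mathrm{bas}}(M_0)$, produce $\alpha\in\varOmega^*(M_1)$ with $f^*\alpha=\beta$. The key local fact is that since $\ker df\subset TF_0$ and $\beta$ is $F_0$-basic, $\beta$ is annihilated by every vector tangent to the fibres of $f$ and is invariant under the (local) flows along such vectors; because $f$ is a surjective submersion with \emph{connected} fibres, this forces $\beta$ to be the pullback of a well-defined form on $M_1$: locally in a submersion chart $f$ looks like a projection $\R^{k}\times\R^{\ell}\to\R^{\ell}$, and a form killed by all $\partial/\partial x_i$ in the $\R^k$-directions and invariant under translations in those directions is exactly a form in the $\R^\ell$-variables only, i.e. a local pullback; connectedness of the fibres guarantees these local descents glue to a global $\alpha$ on $M_1$. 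Finally, that this $\alpha$ is itself $F_1$-basic follows because $f^*$ is injective and intertwines the Lie-algebraic operators: for $X_1$ tangent to $F_1$, pick a lift $X_0$ tangent to $F_0$ with $df(X_0)=X_1$, then $f^*(\iota_{X_1}\alpha)=\iota_{X_0}f^*\alpha=\iota_{X_0}\beta=0$ and similarly $f^*(L_{X_1}\alpha)=L_{X_0}\beta=0$, so $\iota_{X_1}\alpha=L_{X_1}\alpha=0$ by injectivity. Since $f^*$ commutes with $d$ and with the wedge product, the resulting bijection $\varOmega^*_{\mathrm{bas}}(M_1)\to\varOmega^*_{\mathrm{bas}}(M_0)$ is a DGA isomorphism, and composing gives $\varOmega^*_{\mathrm{bas}}(M_1)\cong\varOmega^*_{\mathrm{bas}}(M_2)$.

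I expect the main obstacle to be the careful bookkeeping in surjectivity (iii): verifying that the local descent of a basic form through a submersion is canonical and that these local pieces are consistent on overlaps, which is where the connectedness of the fibres is essential (without it one could get different constant-of-integration choices on different components of a fibre). Everything else — that $f^*$ is an injective DGA map and respects the basic-form conditions — is a direct consequence of $f$ being a submersion whose fibres are contained in, and whose leaf-preimages are, leaves of $F_0$. Alternatively, if one prefers to avoid this point-set argument, one can cite \cite[Definition 2.1 and the remarks following]{Molino}, where precisely this identification $\varOmega^*_{\mathrm{bas}}(M_i)\cong\varOmega^*_{\mathrm{bas}}(M_0)$ is built into the theory of transverse equivalence.
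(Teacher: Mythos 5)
Your proposal is correct and follows essentially the same route as the paper: reduce to showing each $f_i^*$ is a DGA isomorphism onto $\varOmega^*_{\mathrm{bas}}(M_0)$, get injectivity from $f_i$ being a submersion, and get surjectivity by descending a basic form along $f_i$, using that the connected fibres lie in single leaves (so $\Ker df_i\subset\mathcal TF_0$ and the $\iota$- and $L$-conditions on $\beta$ make the descent well defined). Your local submersion-chart formulation of the descent is just a chartwise version of the paper's pointwise construction, so the two arguments coincide in substance.
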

\begin{proof}
We show that each $f_i^* \colon \varOmega^*_{\mathrm{bas}}(M_i) \to \varOmega^*_{\mathrm{bas}}(M_0)$ is a DGA isomorphism. The map $f_i^*$ is injective since $f_i$ is a submersion. To prove that $f_i^*$ is surjective, we take a basic form $\omega \in \varOmega^q_{\mathrm{bas}}(M_0)$ and construct $\omega' \in \varOmega^q_{\mathrm{bas}}(M_i)$ such that $f_i^*\omega'=\omega$. Choose a point $x_i\in M_i$ and $q$ tangent vectors $v_1,\ldots,v_q \in\mathcal TM_i|_{x_i}$. Take any $x_0 \in f_i^{-1}(x_i)$ and tangent vectors $u_1,\ldots,u_q \in \mathcal TM_0|_{x_0}$ such that $df_i(u_j)=v_j$. Then put $\omega'(v_1,\ldots,v_q):=\omega(u_1,\ldots,u_q)$. This definition is independent of all choices. First, it is independent of the choice of a point $x_0$, since $f^{-1}(x_i)$ is connected, belongs to a single leaf of $F_0$ and $L_\xi\omega=0$ for any section $\xi$ of $\mathcal TF_0$. Second, the definition of $\omega'$ is independent of the choice of vectors $u_1,\ldots,u_q$, since $\Ker df_i|_{x_0}\subset\mathcal TF_0|_{x_0}$ and $\omega$ is a basic form. Thus, $\omega'$ is a well-defined basic form and $f_i^*(\omega')=\omega$.
\end{proof}

Transverse equivalence is an equivalence relation on foliated manifolds. Restricting our attention to complex manifolds with maximal torus actions and their canonical foliations, we obtain the appropriate version of transverse equivalence, as described next. 

\begin{defn}\label{defn:equivalent}
Let $(M_1,G_1,y_1), (M_2,G_2,y_2) \in \mathscr{C}_1$. We say that triples $(M_1,G_1,y_1)$ and $(M_2,G_2,y_2)$ are \emph{principal equivalent (p-equivalent for short)} if there exist $(M_0,G_0,y_0) \in \mathscr{C}_1$ and morphisms $(f_i, \alpha_i) \in \Hom_{\mathscr{C}_1}((M_0,G_0,y_0), (M_i,G_i,y_i))$ for $i=1,2$ such that
		\begin{itemize}
			\item $\Ker \alpha_i$ is connected; 
			\item $f_i \colon M_0 \to M_i$ is a principal $\Ker \alpha_i$-bundle. 
		\end{itemize}
\end{defn}
    
%We note that $(M_i,G_i,y_i)$ and $(M_0,G_0,y_0)$ are also equivalent in %Definition \ref{defn:equivalent}. Definition \ref{defn:equivalent} determines an %equivalence relation on $\mathscr{C}_1$. 

\begin{lemma}\label{lemma:equivalence}
Let $(M,G,y), (M_0,G_0,y_0) \in \mathscr{C}_1$. Let $F$ and $F_0$ be the canonical foliations on $M$ and $M_0$, respectively. Let 
$
			(f,\alpha) \in \Hom_{\mathscr{C}_1}((M_0,G_0,y_0),(M,G,y))
$
be a morphism such that
		\begin{itemize}
			\item $\Ker \alpha$ is connected;
			\item $f \colon M_0 \to M$ is a principal $\Ker \alpha$-bundle. 
		\end{itemize}
Then, 
		\begin{itemize}
			\item $f^{-1}(x)$ is connected for all $x \in M$ and
			\item the preimage under $f$ of every leaf of $F$ is a leaf of $F_0$.
		\end{itemize}
	\end{lemma}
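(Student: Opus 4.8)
The plan is to verify the two bullet points in the conclusion separately, reducing everything to the explicit quotient descriptions $M = V_\Sigma/H$, $M_0 = V_{\Sigma_0}/H_0$ provided by the equivalence $\mathscr F_1,\mathscr F_2$ between $\mathscr C_1$ and $\mathscr C_2$, together with the description of the canonical foliation as the orbit foliation of the subgroup $H'\subset G$ with $\h' = p(\h)$.

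First I would handle connectedness of the fibers. Since $f\colon M_0\to M$ is a principal $\Ker\alpha$-bundle, each fiber $f^{-1}(x)$ is a torsor over $\Ker\alpha$, hence is homeomorphic to $\Ker\alpha$; by hypothesis $\Ker\alpha$ is connected, so $f^{-1}(x)$ is connected for every $x\in M$. This is essentially immediate from the definition of p-equivalence.

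The substantive point is the second bullet: the preimage under $f$ of a leaf of $F$ is a leaf of $F_0$. Recall $F$ is the foliation of $M$ by orbits of $H'\subset G$ (where $\h' = p(\h)$) and $F_0$ is the foliation of $M_0$ by orbits of $H_0'\subset G_0$ (where $\h_0' = p(\h_0)$). Because $(f,\alpha)$ is a morphism in $\mathscr C_1$, $f$ is $\alpha$-equivariant, so $f(g\cdot z) = \alpha(g)\cdot f(z)$. I would argue that $\alpha$ carries $H_0'$ into $H'$ and moreover that $H_0'$, together with $\Ker\alpha$, generates exactly the preimage $\alpha^{-1}(H')$ at the Lie-algebra level: concretely, $d\alpha(\h_0') \subset \h'$ (this follows from the condition $d\alpha^\C(\h_0)\subset\h$ in the definition of $\mathscr C_2$-morphisms, applied to real parts $p(\h_0), p(\h)$), and conversely $d\alpha^{-1}(\h') = \h_0' + \ker d\alpha$. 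The key input here is that $f$ is a principal $\Ker\alpha$-bundle, so $\ker d\alpha = \mathrm{Lie}(\Ker\alpha)$ acts along the fibers of $f$, and the $G_0$-action on $M_0$ combined with the bundle projection identifies $\mathrm{Lie}(\Ker\alpha)\oplus\h_0'$ with the tangent distribution to $f^{-1}(\text{leaf of }F)$. Thus for a leaf $L = H'\cdot x \subset M$, its preimage $f^{-1}(L)$ is a single orbit of the connected subgroup of $G_0$ with Lie algebra $\h_0' + \ker d\alpha$, hence a single leaf of $F_0$ — here I use that $\Ker\alpha$ is connected to ensure the relevant subgroup is connected and that the $\Ker\alpha$-action is transitive on each $f$-fiber, so no leaf of $F_0$ is broken into pieces and no two get merged spuriously.

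I expect the main obstacle to be the bookkeeping in this last step: showing precisely that $f^{-1}(L)$ is \emph{one} leaf of $F_0$ rather than a union of several, and that it is \emph{whole} rather than a proper subset. For ``whole'', one uses that $f$ maps $H_0'$-orbits onto $H'$-orbits surjectively (via $\alpha$ restricted to $H_0'$, whose image one must check is all of $H'$, or at least that $\alpha(H_0')\cdot$ is dense/transitive on $L$ in the leaf topology); for ``one'', one uses connectedness of $\Ker\alpha$ and the fact that the foliation $F_0$ is the orbit foliation of the connected group generated by $H_0'$ and $\Ker\alpha$. Since the canonical foliations are orbit foliations of almost free actions of connected abelian groups, both issues reduce to the group-theoretic statement $\alpha^{-1}(H') = H_0'\cdot\Ker\alpha$ at the level of identity components, which in turn follows from $d\alpha^{-1}(\h') = \h_0' + \ker d\alpha$ — and this last equality is where one must carefully use all the $\mathscr C_2$-morphism axioms on $\h_0,\h$ and the definition $\h' = p(\h)$, $\h_0' = p(\h_0)$ together with the fact that $p$ is injective on $\h$ and on $\h_0$.
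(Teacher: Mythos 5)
Your first bullet is fine and matches the paper. The gap is in the second bullet, and it is concentrated exactly where you defer the work: your key identity $d\alpha^{-1}(\h') = \h_0' + \ker d\alpha$ is too weak to yield the conclusion. If $\ker d\alpha \not\subset \h_0'$, then the orbit of the connected group with Lie algebra $\h_0' + \ker d\alpha$ is strictly larger than an $H_0'$-orbit (a dimension count using almost-freeness: $\dim f^{-1}(L) = \dim \h' + \dim \ker d\alpha$, while a leaf of $F_0$ has dimension $\dim \h_0'$), so $f^{-1}(L)$ would be a union of leaves of $F_0$, not a single leaf. Your phrase ``identifies $\mathrm{Lie}(\Ker\alpha)\oplus \h_0'$ with the tangent distribution'' suggests you picture these as transverse, which is the opposite of what is true: the statement one actually needs, and which the paper proves, is the stronger $(d\alpha)^{-1}(\h') = \h_0'$, whence $\ker d\alpha \subset \h_0'$ and (by connectedness) $\Ker\alpha \subset H_0'$, so that the extra $\Ker\alpha$-directions are already inside the leaf.

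Moreover, you cannot get this, nor the surjectivity of $\alpha|_{H_0'}\colon H_0'\to H'$ needed for ``whole'', from the $\mathscr{C}_2$-morphism axioms alone; those only give $d\alpha(\h_0')\subset\h'$ (via $p\circ d\alpha^\C = d\alpha\circ p_0$ and $d\alpha^\C(\h_0)\subset\h$). The paper's proof imports from \cite[Theorem 11.1]{ishida2013complex} that, for a principal-bundle morphism, $\alpha$ is surjective and $d\alpha$ induces a bijection between the primitive generators of the $1$-cones of $\Sigma_0$ and of $\Sigma$; combined with completeness of the quotient fans this forces the induced map $\overline{d\alpha}\colon \g_0/\h_0'\to\g/\h'$ to be a linear isomorphism, which is precisely what gives $(d\alpha)^{-1}(\h') = \h_0'$ and the surjectivity of $d\alpha|_{\h_0'}$ onto $\h'$. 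With these two facts in hand, the leaf-by-leaf argument (lift $h'\in H'$ to $H_0'$, absorb the residual $k\in\Ker\alpha\subset H_0'$) closes both the ``whole'' and ``one'' issues you flagged. Without them your argument does not go through.
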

	\begin{proof}
		Since $\Ker \alpha$ is connected and $f \colon M_0 \to M$ is a principal $\Ker \alpha$-bundle, we have that $f^{-1}(x)$ is connected for all $x \in M$. 
		
		We put $(\Sigma, \h, G) = \mathscr{F}_1(M,G,y)$ and $(\Sigma_0,\h_0,G_0) = \mathscr{F}_1(M_0,G_0,y_0)$. 
		It follows from \cite[Theorem 11.1]{ishida2013complex} that $\alpha$ is surjective and the differential $d\alpha \colon \g_0 \to \g$ induces a one-to-one correspondence between the primitive generators of $1$-cones in $\Sigma$ and the primitive generators of $1$-cones in $\Sigma'$. 
		Let $p_0 \colon \g_0^\C \to \g_0$ and $p \colon \g^\C \to \g$ be the projections. We put $\h'=p(\h)$ and $\h'_0 = p_0(\h_0)$. Let $q_0 \colon \g_0 \to \g_0/\h_0'$ and $q \colon \g \to \g/\h'$ be the quotient maps. Since $d\alpha^\C (\h_0) \subset \h$ and $p\circ d\alpha^\C = d\alpha  \circ p_0$, we have $d\alpha(\h_0') \subset \h'$. Hence, $d\alpha \colon \g_0 \to \g$ induces a linear map $\overline{d\alpha} \colon \g_0/\h_0' \to \g/\h'$. Since $d\alpha$ induces a one-to-one correspondence between the primitive generators of $1$-cones in $\Sigma$ and the primitive generators of $1$-cones in $\Sigma'$, we have that $\overline{d\alpha}$ induces a one-to-one correspondence between the primitive generators of $1$-cones in $q(\Sigma)$ and the primitive generators of $1$-cones in $q_0(\Sigma')$. This implies that $\overline{d\alpha}$ is an isomorphism. Therefore, $(d\alpha)^{-1}(\h') = \h_0'$. Furthermore, since $d\alpha$ is surjective, its restriction $d\alpha|_{\h_0'}\colon\h_0'\to\h'$ is also surjective. For the corresponding Lie subgroups $H_0'=\exp_{G}{\h_0'}\subset G_0$ and $H'=\exp_{G}\h'\subset G$, the map $\alpha|_{H'_0}\colon H'_0\to H'$ is also surjective.
		
Let $L$ be a leaf of $F$. By definition, $L$ is an $H'$-orbit, that is, $L = H' \cdot x$ for some $x \in L$,
Since $f \colon M_0 \to M$ is a principal bundle, there exists $x_0 \in M_0$ such that $f(x_0) = x$. We need to show that $f^{-1}(L) = H_0'\cdot x_0$. The map $f$ is $\alpha$-equivariant and $\alpha (H_0') = H'$ by the previous paragraph, which implies that $H'_0\cdot x_0\subset f^{-1}(L)$. To show the opposite inclusion, let $x_0' \in f^{-1}(L)$. Then $f(x_0') \in L$. Hence, there exists $h' \in H'$ such that $x = h' \cdot f(x_0')$. Since $\alpha|_{H_0'}\colon H_0'\to H'$ is surjective, there exists ${h_0'} \in H_0'$ such that $\alpha({h_0'}) = h'$. Then we have $f({h_0'}\cdot x_0') = h' \cdot f(x_0') = x = f(x_0)$. Since $f$ is a principal $\Ker \alpha$-bundle, there exists $k \in \Ker \alpha$ such that $x_0 = k\cdot ({h_0'}\cdot x_0')$. Now $(d\alpha)^{-1}(\h') = \h_0'$ implies that $\Ker d\alpha\subset\h'_0$. Since $\Ker \alpha$ is connected, we obtain $\Ker\alpha \subset H_0'$. Therefore, $k\cdot {h_0'} \in H_0'$ and  $x_0'= (k {h_0'})^{-1} \cdot x_0 \in H_0' \cdot x_0$. Thus, $f^{-1}(L) =H_0'\cdot x_0$ is a leaf of~$F_0$. 
\end{proof}

\begin{thm}\label{thm:equivalence}
		Let $(M_1,G_1,y_1), (M_2,G_2,y_2) \in \mathscr{C}_1$ be complex manifolds with maximal torus actions. Let $F_1$ and $F_2$ be the canonical foliations on $M_1$ and $M_2$, respectively. If $(M_1,G_1,y_1)$ and $(M_2,G_2,y_2)$ are p-equivalent, then $(M_1,F_1)$ and $(M_2,F_2)$ are transversely equivalent. 
	\end{thm}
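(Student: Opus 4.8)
The plan is to deduce Theorem~\ref{thm:equivalence} directly from the two results that immediately precede it, namely Lemma~\ref{lemma:equivalence} and the definition of transverse equivalence at the start of Section~\ref{transverse}. Since $(M_1,G_1,y_1)$ and $(M_2,G_2,y_2)$ are assumed p-equivalent, Definition~\ref{defn:equivalent} furnishes an object $(M_0,G_0,y_0)\in\mathscr{C}_1$ together with morphisms $(f_i,\alpha_i)\in\Hom_{\mathscr{C}_1}((M_0,G_0,y_0),(M_i,G_i,y_i))$ for $i=1,2$ such that $\Ker\alpha_i$ is connected and $f_i\colon M_0\to M_i$ is a principal $\Ker\alpha_i$-bundle. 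Let $F_0$ denote the canonical foliation on $M_0$. The goal is simply to check that $(M_0,F_0)$ together with the maps $f_1,f_2$ witnesses the transverse equivalence of $(M_1,F_1)$ and $(M_2,F_2)$.

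First I would observe that each $f_i$ is a surjective submersion: it is a principal bundle projection (in particular a locally trivial fibre bundle with connected structure group), hence a surjective submersion, as required by the definition of transverse equivalence. Next, I would apply Lemma~\ref{lemma:equivalence} to the morphism $(f_i,\alpha_i)\in\Hom_{\mathscr{C}_1}((M_0,G_0,y_0),(M_i,G_i,y_i))$ for each $i=1,2$; the hypotheses of that lemma (namely that $\Ker\alpha_i$ is connected and $f_i$ is a principal $\Ker\alpha_i$-bundle) are exactly the conditions packaged in the p-equivalence. The lemma yields, for each $i$, that $f_i^{-1}(x)$ is connected for all $x\in M_i$, and that the preimage under $f_i$ of every leaf of $F_i$ is a leaf of $F_0$. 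These are precisely the two bulleted conditions in the definition of transverse equivalence. Assembling these facts for $i=1$ and $i=2$ gives that $(M_1,F_1)$ and $(M_2,F_2)$ are transversely equivalent via $(M_0,F_0)$ and $f_1,f_2$, completing the proof.

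Since the argument is essentially a bookkeeping reduction to Lemma~\ref{lemma:equivalence}, there is no serious obstacle; the only point requiring a moment's care is the elementary remark that a principal bundle with connected structure group is a surjective submersion, which I would state in one line. All the genuine geometric content — tracking how $d\alpha_i$ identifies the transverse fans, showing $\overline{d\alpha_i}$ is an isomorphism, and verifying that leaves pull back to leaves — has already been carried out inside the proof of Lemma~\ref{lemma:equivalence}, so here it suffices to invoke that lemma twice, once for each index.
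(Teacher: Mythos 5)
Your argument is exactly the paper's: the authors prove Theorem~\ref{thm:equivalence} by the one-line remark that it ``follows from Lemma~\ref{lemma:equivalence} immediately,'' and your proposal simply spells out that reduction (applying the lemma once for each $i$ and noting that a principal bundle projection is a surjective submersion). The proposal is correct and takes essentially the same route.
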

	\begin{proof}
		This follows from Lemma \ref{lemma:equivalence} immediately. 
	\end{proof}
    
The p-equivalence class of a maximal torus action is determined by the combinatorial data defined next.
    
\begin{defn}\label{defn:markedfan}
		A \emph{marked fan} is a quadruple $(\widetilde{V}, \widetilde{\Gamma}, \widetilde{\Sigma}, \widetilde{\lambda})$, where
		\begin{itemize}
			\item $\widetilde{V}$ is a finite dimensional $\R$-vector space; 
			\item $\widetilde{\Gamma}$ is a finitely generated subgroup of $\widetilde{V}$ that spans $\widetilde{V}$ linearly;
			\item $\widetilde{\Sigma}$ is a fan in $\widetilde V$ and each $1$-cone of $\widetilde\Sigma$ is generated by an element of~$\widetilde{\Gamma}$; 
			\item $\widetilde{\lambda}$ is a function $\widetilde{\lambda} \colon\widetilde{\Sigma}^{(1)} \to \widetilde{\Gamma}$, where $\widetilde{\Sigma}^{(1)}$ is the set of $1$-cones of~$\widetilde{\Sigma}$, and $\widetilde{\lambda} (\rho)$ is a generator of $\rho \in \widetilde{\Sigma}^{(1)}$. 
		\end{itemize}
We say that a marked fan $(\widetilde{V}, \widetilde{\Gamma}, \widetilde{\Sigma}, \widetilde{\lambda})$ is \emph{simplicial} (respectively, \emph{complete}) if the fan $\widetilde{\Sigma}$ is simplicial (respectively, complete). We denote by $\widetilde{\mathscr{C}}_2$ the class that consists of complete simplicial marked fans. 
		
		We say that marked fans $(\widetilde{V}_1,\widetilde{\Gamma}_1,\widetilde{\Sigma}_1,\widetilde{\lambda}_1)$ and $(\widetilde{V}_2,\widetilde{\Gamma}_2,\widetilde{\Sigma}_2,\widetilde{\lambda}_2)$ are \emph{isomorphic} if there exists a linear isomorphism $\varphi \colon \widetilde{V}_1\to \widetilde{V}_2$ such that 
		\begin{itemize}
			\item $\varphi (\widetilde{\Gamma}_1) = \widetilde{\Gamma}_2$;
			\item $\varphi$ induces an isomorphism of fans $\widetilde{\Sigma}_1$ and $\widetilde{\Sigma}_2$;
%Namely, $\varphi(\widetilde{\sigma}_1) \in \widetilde{\Sigma}_2$ for all %$\widetilde{\sigma}_1 \in \widetilde{\Sigma}_1$ and the map $\widetilde{\Sigma}_1 %\to \widetilde{\Sigma}_2$ given by $\widetilde{\sigma}_1 \mapsto %\varphi(\widetilde{\sigma}_1)$ is bijective. We denote the bijection %$\widetilde{\Sigma}_1\to \widetilde{\Sigma}_2$ by the same symbol $\varphi$.
			\item $\widetilde{\lambda}_2 \circ \varphi|_{\widetilde{\Sigma}_1^{(1)}} = \varphi \circ \widetilde{\lambda}_1$. 
		\end{itemize}
\end{defn}
    
\begin{constr}[the marked fan data of a maximal torus action]  We define a map  
\[
  \widetilde{\mathscr{F}}_1 \colon \mathscr{C}_1 \to \widetilde{\mathscr{C}}_2
\]
that assigns to each complex manifold with maximal torus action $(M,G,y) \in \mathscr{C}_1$ a complete simplicial marked fan $(\widetilde{V}, \widetilde{\Gamma}, \widetilde{\Sigma}, \widetilde{\lambda}) \in \widetilde{\mathscr{C}}_2$ as follows. Set $(\Sigma,\h,G) = \mathscr{F}_1(M,G,y)$. As before, let $p \colon \g^\C \to \g$ be the projection, $\h' = p(\h)$ and $q\colon \g \to \g/\h'$ the quotient map. 
    For each $1$-cone $\rho \in \Sigma^{(1)}$, we denote by $\lambda(\rho) \in \Ker \exp_G$ the primitive generator of $\rho$. Now set $\widetilde{V} := \g/\h'$, $\widetilde{\Gamma} := q(\Ker \exp_G)$, $\widetilde{\Sigma} := q(\Sigma)$ and $\widetilde{\lambda}(q(\rho)) := q(\lambda(\rho))$ for $\rho \in \Sigma^{(1)}$. 
The properties of the map 
$\widetilde{\mathscr{F}}_1$ are described in the next two theorems.
\end{constr}    
    
\begin{thm}\label{thm:fundamental}
Let $(M_1,G_1,y_1),(M_2,G_2,y_2) \in \mathscr{C}_1$  be complex manifolds with maximal torus actions. Then, $(M_1,G_1,y_1)$ and $(M_2,G_2,y_2)$ are p-equivalent if and only if the marked fans $\widetilde{\mathscr{F}}_1(M_1,G_1,y_1)$ and $\widetilde{\mathscr{F}}_1(M_2,G_2,y_2)$ are isomorphic. 
\end{thm}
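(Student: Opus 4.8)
The plan is to prove the two implications separately: the forward one (``p-equivalent $\Rightarrow$ marked fans isomorphic'') repackages the analysis already carried out in the proof of Lemma~\ref{lemma:equivalence}, while the converse requires building, out of a marked-fan isomorphism, a single object of $\mathscr{C}_1$ that dominates both manifolds by principal bundles. For the forward direction, suppose $(M_1,G_1,y_1)$ and $(M_2,G_2,y_2)$ are p-equivalent via $(M_0,G_0,y_0)$ and morphisms $(f_i,\alpha_i)\in\Hom_{\mathscr{C}_1}((M_0,G_0,y_0),(M_i,G_i,y_i))$. Put $(\Sigma_0,\h_0,G_0)=\mathscr{F}_1(M_0,G_0,y_0)$, $(\Sigma_i,\h_i,G_i)=\mathscr{F}_1(M_i,G_i,y_i)$, $\h_i'=p(\h_i)$, $q_i\colon\g_i\to\g_i/\h_i'$, and $N_i=\Ker\exp_{G_i}$. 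Exactly as in the proof of Lemma~\ref{lemma:equivalence}, \cite[Theorem~11.1]{ishida2013complex} yields that each $\alpha_i$ is surjective with connected kernel and that $d\alpha_i$ induces a one-to-one correspondence of primitive generators of $1$-cones $\Sigma_0^{(1)}\to\Sigma_i^{(1)}$; hence the induced map $\overline{d\alpha_i}\colon\g_0/\h_0'\to\g_i/\h_i'$ is a linear isomorphism. I would then check that $\overline{d\alpha_i}$ is an isomorphism of marked fans $\widetilde{\mathscr{F}}_1(M_0,G_0,y_0)\to\widetilde{\mathscr{F}}_1(M_i,G_i,y_i)$: it carries $q_0(N_0)$ onto $q_i(N_i)$ since $d\alpha_i(N_0)=N_i$ (the kernel of $\alpha_i$ being a subtorus); it carries the complete fan $q_0(\Sigma_0)$ onto $q_i(\Sigma_i)$ since $\overline{d\alpha_i}\circ q_0=q_i\circ d\alpha_i$ and $d\alpha_i$ is a bijective morphism of fans $\Sigma_0\to\Sigma_i$; and it intertwines the markings since $d\alpha_i$ preserves primitive generators. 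Composing, $\overline{d\alpha_2}\circ(\overline{d\alpha_1})^{-1}$ is the required isomorphism $\widetilde{\mathscr{F}}_1(M_1,G_1,y_1)\cong\widetilde{\mathscr{F}}_1(M_2,G_2,y_2)$.

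For the converse, let $\varphi\colon\widetilde V_1\to\widetilde V_2$ be an isomorphism of the marked fans; it matches the fans and their markings, so the underlying simplicial complex $\SK$ and the labelling of $1$-cones $\rho_j^{(1)}\leftrightarrow\rho_j^{(2)}$ are common to the two objects. Working in $\mathscr{C}_2$ through the equivalence $\mathscr{C}_1\simeq\mathscr{C}_2$, I would form the fibre product over $\widetilde V:=\widetilde V_2$:
\[
  \g_0:=\{(x_1,x_2)\in\g_1\oplus\g_2\colon \varphi(q_1 x_1)=q_2 x_2\},\qquad N_0:=(N_1\oplus N_2)\cap\g_0,\qquad G_0:=\g_0/N_0,
\]
with projections $\mathrm{pr}_i\colon\g_0\to\g_i$. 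Put $\mu_j:=(\lambda_1(\rho_j^{(1)}),\lambda_2(\rho_j^{(2)}))$, which lies in $N_0$ because $\varphi$ respects the markings; set $\Sigma_0:=\{\R_\ge\langle\mu_j\colon j\in I\rangle\colon I\in\SK\}$; and take $\h_0:=\h_1\oplus\h_2$, which sits inside $\g_0^\C$ because each $\h_i$ is contained in $(\h_i')^\C=\Ker q_i^\C$. The verification then has three steps. (a) $\Sigma_0$ is a nonsingular fan with respect to $N_0$, with underlying simplicial complex $\SK$; the crux is that for each maximal $I\in\SK$ the set $\{\mu_j\colon j\in I\}$ extends to a $\Z$-basis of $N_0$, which I would deduce from the fact that $\{\lambda_i(\rho_j^{(i)})\colon j\in I\}$ extends to a $\Z$-basis of $N_i$. (b) $p|_{\h_0}$ is injective with image $\h_1'\oplus\h_2'=\Ker q_0$, so the quotient $q_0\colon\g_0\to\g_0/(\h_1'\oplus\h_2')$ is identified with $\varphi\circ q_1\circ\mathrm{pr}_1=q_2\circ\mathrm{pr}_2$, whence $q_0(\Sigma_0)=q_2(\Sigma_2)$ is complete and $\sigma\mapsto q_0(\sigma)$ is bijective; thus $(\Sigma_0,\h_0,G_0)\in\mathscr{C}_2$. (c) The torus homomorphisms $\alpha_i\colon G_0\to G_i$ with $d\alpha_i=\mathrm{pr}_i$ belong to $\Hom_{\mathscr{C}_2}$ (each $\mathrm{pr}_i$ is a bijective morphism of fans $\Sigma_0\to\Sigma_i$ and $d\alpha_i^\C(\h_0)=\h_i$), are surjective with connected kernel (since $d\alpha_i\colon N_0\to N_i$ is onto, using $\varphi(\widetilde\Gamma_1)=\widetilde\Gamma_2$), and $d\alpha_i$ carries the primitive generator $\mu_j$ of the $j$-th ray of $\Sigma_0$ bijectively onto that of $\Sigma_i$. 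Applying $\mathscr{F}_2$ and \cite[Theorem~11.1]{ishida2013complex}, the induced maps $f_i\colon M_0\to M_i$, where $(M_0,G_0,y_0):=\mathscr{F}_2(\Sigma_0,\h_0,G_0)$, are principal $\Ker\alpha_i$-bundles with $\Ker\alpha_i$ connected; together with $(M_0,G_0,y_0)$ they witness the p-equivalence of $(M_1,G_1,y_1)$ and $(M_2,G_2,y_2)$.

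The main obstacle I anticipate is step (a): proving that the fibre-product fan $\Sigma_0$ is nonsingular with respect to the fibre-product lattice $N_0$, i.e., that ``extending to a $\Z$-basis'' for the cone generators survives passage to the fibre product of lattices. Together with the identification of $q_0(\Sigma_0)$ in step (b), this is where all the combinatorial hypotheses on marked-fan isomorphisms get used---in particular $\varphi(\widetilde\Gamma_1)=\widetilde\Gamma_2$, which simultaneously makes $N_0$ a full-rank lattice, makes $q_0(\Sigma_0)$ complete, and makes the kernels $\Ker\alpha_i$ connected. A secondary point to settle is the precise form of \cite[Theorem~11.1]{ishida2013complex} needed here: the converse of what is used in Lemma~\ref{lemma:equivalence}, namely that a morphism in $\mathscr{C}_2$ given by a surjective homomorphism with connected kernel inducing a bijective correspondence of primitive $1$-cone generators becomes, under $\mathscr{F}_2$, a principal bundle with connected structure group. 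Should that not be literally available, it can be extracted from a direct analysis of the quotient map $V_{\Sigma_0}/H_0\to V_{\Sigma_i}/H_i$.
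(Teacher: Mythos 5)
Your forward direction is sound and coincides with the paper's: the isomorphisms $\overline{d\alpha_i}\colon\g_0/\h_0'\to\g_i/\h_i'$ from Lemma~\ref{lemma:equivalence} are promoted to marked-fan isomorphisms, using connectedness of $\Ker\alpha_i$ to get $d\alpha_i(\Ker\exp_{G_0})=\Ker\exp_{G_i}$. The gap is in the converse, and it is not the step you flag as the main obstacle (nonsingularity of $\Sigma_0$ with respect to $N_0$ follows from a short primitivity argument carried out in the first factor $N_1$). The real problem is that your $G_0:=\g_0/N_0$ is not a compact torus in the non-rational case, which is exactly the case the theorem is for. Your $\g_0$ is the fibre product of vector spaces and has dimension $\dim\g_1+\dim\g_2-\dim\widetilde V$, whereas $N_0=N_1\times_{\widetilde\Gamma}N_2$ is the kernel of the surjection $N_1\times N_2\to\widetilde\Gamma_2$, $(\gamma_1,\gamma_2)\mapsto\varphi(q_1\gamma_1)-q_2\gamma_2$, hence has rank $\dim\g_1+\dim\g_2-\mathrm{rank}\,\widetilde\Gamma_2$. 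Since $\widetilde\Gamma_2=q_2(\Ker\exp_{G_2})$ is in general only a finitely generated \emph{dense} subgroup of $\widetilde V$ with $\mathrm{rank}\,\widetilde\Gamma_2>\dim\widetilde V$ (this is the meaning of non-rationality here), $N_0$ does not span $\g_0$, so $\g_0/N_0$ is a torus times a positive-dimensional vector space, $(\Sigma_0,\h_0,G_0)$ is not an object of $\mathscr{C}_2$, and neither $\mathscr{F}_2$ nor \cite[Theorem~11.1]{ishida2013complex} can be invoked.

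The paper's fix is to define $\g_0$ as the \emph{linear hull} of the discrete group $N_0$ inside $\g_1\times\g_2$ and $G_0=\exp_{G_1\times G_2}(\g_0)$, a genuine subtorus with $\Ker\exp_{G_0}=N_0$. This shrinking forces two further changes to your steps (b)--(c). First, $\h_1\times\h_2$ need not lie in the smaller $\g_0^\C$, so one must set $\h_0:=(\h_1\times\h_2)\cap\g_0^\C$ and re-verify injectivity of $p_0|_{\h_0}$ together with $\Ker(q_1\circ d\alpha_1)=\Ker(q_2\circ d\alpha_2)=p_0(\h_0)$ (the paper's equalities~\eqref{eq:2equalities}); your identification $\Ker q_0=\h_1'\oplus\h_2'$ relies on the big $\g_0$ and must be abandoned. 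Second, completeness of $q_0(\Sigma_0)$ and bijectivity of $\Sigma_0\to q_0(\Sigma_0)$ are then obtained from the identification $q_0=(\overline{d\alpha_1})^{-1}\circ q_1\circ d\alpha_1$ rather than from $q_0(\Sigma_0)=q_2(\Sigma_2)$ directly. The remaining ingredients of your outline --- the lattice $N_0$, the markings $\mu_j$, surjectivity of $\alpha_i$ with connected kernel via $\varphi(\widetilde\Gamma_1)=\widetilde\Gamma_2$, and the appeal to \cite[Theorem~11.1]{ishida2013complex} for the principal bundles --- do match the paper.
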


\begin{proof}%[Proof of Theorem \ref{thm:fundamental}]
		For $j=0,1,2$, let $(\Sigma_j,\h_j,G_j) := \mathscr F_1 (M_j,G_j,y_j)$. Let $p_j \colon \g_j^\C \to \g_j$ be the projection, $\h_j':= p_j(\h_j)$, $q_j \colon \g_j \to \g_j/p_j(\h_j)$ the quotient map and $\exp_{G_j} \colon \g_j \to G_j$ the exponential map.
		
		Suppose that $(M_1,G_1,y_1)$ and $(M_2,G_2,y_2)$ are p-equivalent. Then there exists a triple $(M_0,G_0,y_0) \in \mathscr{C}_1$ and $(f_i,\alpha_i) \in \Hom _{\mathscr{C}_1}((M_0,G_0,y_0),(M_i,G_i,y_i))$ for $i=1,2$ such that $\Ker \alpha_i$ is connected and $f_i \colon M_0 \to M_i$ is a principal $\Ker \alpha_i$-bundle. The map $\overline{d\alpha_i} \colon \g_0/\h_0' \to \g_i/\h_i'$ is an isomorphism (see the proof of Lemma~\ref{lemma:equivalence}) and it induces an isomorphism between $\widetilde{\mathscr{F}}_1(M_0,G_0,y_0)$ and $\widetilde{\mathscr{F}}_1(M_i,G_i,y_i)$. Hence, $\widetilde{\mathscr{F}}_1(M_1,G_1,y_1)$ and $\widetilde{\mathscr{F}}_1(M_2,G_2,y_2)$ are isomorphic.

		Conversely, suppose that $\widetilde{\mathscr F}_1(M_1,G_1,y_1)$ and $\widetilde{\mathscr F}_1(M_2,G_2,y_2)$ are isomorphic. By definition, this means that there is a linear isomorphism $\varphi \colon \g_1/\h_1' \to \g_2/\h_2'$ such that
		\begin{itemize}
			\item $\varphi(q_1(\Ker \exp_{G_1})) = q_2(\Ker \exp_{G_2})$; 
			\item $\varphi$ induces an isomorphism of fans $q_1(\Sigma_1) \to q_2(\Sigma_2)$;
			\item $\widetilde{\lambda}_2 \circ \varphi|_{q_1(\Sigma_1^{(1)})} = \varphi \circ \widetilde{\lambda}_1$. 
		\end{itemize}
		We construct $(\Sigma_0, \h_0, G_0) \in \mathscr{C}_2$ and use \cite[Theorem 11.1]{ishida2013complex} to show that $(M_1,G_1,y_1)$ and $(M_2,G_2,y_2)$ are equivalent. 
		Define 
		\begin{equation*}
			\Gamma_0 := \{ (\gamma_1,\gamma_2) \in \Ker \exp_{G_1} \times \Ker \exp_{G_2} \mid \varphi(q_1(\gamma_1)) = q_2(\gamma_2)\}
		\end{equation*}
and denote by $\g_0$ the linear hull of the discrete subgroup $\Gamma_0$ in $\g_1 \times \g_2$.  Let $\exp_{G_1 \times G_2} \colon \g_1 \times \g_2 \to G_1 \times G_2$ be the exponential map. Then $G_0:= \exp_{G_1\times G_2} (\g_0)$ is a subtorus of $G_1 \times G_2$ and $\Gamma_0$ coincides with the kernel of $\exp_{G_0} \colon \g_0 \to G_0$. Since $q_1$ and $q_2$ are bijective on the sets of cones of the fans, for each $\sigma_1 \in \Sigma_1$ we can define $\varPhi(\sigma_1)=q_2^{-1}\circ \varphi \circ q_1(\sigma_1)\in\Sigma_2$.  This implies that, for each $\rho_1 \in \Sigma_1^{(1)}$, the element $\lambda_0(\rho_1) := (\lambda_1(\rho_1), \lambda_2(\varPhi (\rho_1)))$ is a primitive element of~$\Gamma_0$. 
		Suppose that a cone $\sigma_1\in \Sigma_1$ is spanned by $\rho_{1,1}, \dots, \rho_{1,k} \in \Sigma_1^{(1)}$. Then we denote by $\varPsi(\sigma_1)$ the cone in $\g_0$ spanned by $\lambda_0(\rho_{1,1}), \dots, \lambda_0(\rho_{1,k})$. Under this notation, we have a nonsingular fan $\Sigma_0 := \{ \varPsi({\sigma_1}) \subset \g_0 \mid \sigma_1 \in \Sigma_1\}$. Let $\alpha_i \colon G_0 \to G_i$ be the projection $G_1 \times G_2 \to G_i$ restricted to $G_0 \subset G_1 \times G_2$ for $i=1,2$. Then $\alpha_i \colon G_0 \to G_i$ is surjective and its differential $d\alpha_i \colon \mathfrak{g}_0 \to \mathfrak{g}_i$ induces a morphism of fans $\Sigma_0 \to \Sigma_i$ that is bijective on the sets of cones. In particular, $d\alpha_i \colon \mathfrak{g}_0 \to \mathfrak{g}_i$ induces a one-to-one correspondence between the primitive generators of $1$-cones in $\Sigma_0$ and the primitive generators of $1$-cones in $\Sigma_i$. 
		
Now define $\h_0 := (\h_1 \times \h_2) \cap \g_0^\C$. Then $\h_0$ is a $\C$-subspace of $\g_0^\C \subset \g_1^\C \times \g_2^\C$.  Moreover, the restriction $p_0|_{\h_0}$ of the projection $p_0 \colon \g_0^\C \to \g_0$ is injective because both $p_1|_{\h_1}$ and $p_2|_{\h_2}$ are injective. Put $\h_0' := p_0(\h_0)$. Let $q_0 \colon \g_0 \to \g_0/\h_0'$ be the quotient map. Since $q_i$ and $d\alpha_i$ both are surjective, the composite $q_i \circ d\alpha_i \colon \g_0 \to \g_i/\h_i'$ is surjective for $i=1,2$. We claim that
	\begin{equation}\label{eq:2equalities}
		\Ker q_1 \circ d\alpha_1 =  \Ker  q_2 \circ d\alpha_2 = \h_0'.
	\end{equation} 
The first equality above holds since $q_2\circ d\alpha_2 = \varphi \circ q_1 \circ d\alpha_1$ and $\varphi$ is an isomorphism. For the second equality of \eqref{eq:2equalities}, let $(\gamma_1,\gamma_2) \in \Ker q_2\circ d\alpha_2 $. Then we have $q_2(\gamma_2) = 0$ and hence $\gamma_2 \in \h_2'$. The identity $q_2\circ d\alpha_2 = \varphi  \circ q_1\circ  d\alpha_1$ implies $\varphi \circ q_1(\gamma_1) = 0$. Since $\varphi$ is an isomorphism, $q_1(\gamma_1)= 0$. Hence, $\gamma_1 \in \h_1'$. Therefore $(\gamma_1, \gamma_2) \in \h_0'$. We proved that $\Ker q_2  \circ d\alpha_2 \subset \h_0'$. For the opposite inclusion, let $(\gamma_1,\gamma_2) \in \h_0'$. Then $\gamma_1 \in \h_1'$ and $\gamma_2 \in \h_2'$. Then $q_2\circ d\alpha_2(\gamma_1,\gamma_2) = q_2(\gamma_2) = 0$, which implies $\Ker q_2\circ  d\alpha_2 \supset \h_0'$. 
	
	Since $d\alpha_1 \colon \g_0 \to \g_1$ is surjective and $\Ker q_1\circ d\alpha_1 = \h_0'$, we have that $d\alpha_1$ induces an isomorphism $\overline{d\alpha}_1 \colon \g_0/\h_0' \to \g_1/\h_1'$. Since the maps $\Sigma_0 \to \Sigma_1$ given by $\sigma_0 \mapsto d\alpha_1(\sigma_0)$ and $\Sigma_1 \to q_1(\Sigma_1)$ given by $\sigma_1 \mapsto q_1(\sigma_1)$ are bijective, the composite $\Sigma_0 \to q_1(\Sigma_1)$ given by $\sigma_0 \mapsto q_1 \circ d\alpha_1 (\sigma_0)$ is also bijective. Now, both $\overline{d\alpha}_1$ and $q_0 = (\overline{d\alpha}_1)^{-1}\circ q_1 \circ d\alpha_1$ are isomorphism, so that
\begin{equation*}
		q_0 (\Sigma_0) = \{ q_0(\sigma_0) \colon \sigma_0 \in \Sigma_0\}
\end{equation*}
is a complete fan in $\g_0/\h_0'$ and the map $\Sigma_0 \to q_0(\Sigma_0)$ given by $\sigma _0 \mapsto q_0(\sigma_0)$ is bijective. Therefore $(\Sigma_0,\h_0, G_0) \in \mathscr{C}_2$. 

Applying \cite[Theorem 11.1]{ishida2013complex} to the morphism $\alpha_i \colon (\Sigma_0,\h_0,G_0) \to (\Sigma_i,\h_i, G_i)$, we obtain a $\alpha_i$-equivariant principal $\Ker \alpha_i$-bundle $V_{\Sigma_0}/H_0 \to V_{\Sigma_i}/H_i \cong M_i$. It remains to show that $\Ker \alpha_i$ is connected for $i=1,2$. Since $\alpha_i \colon G_0 \to G_i$ is surjective, we have that $\Ker \alpha_i$ is connected if and only if $d\alpha_i(\Ker \exp_{G_0}) = \Ker \exp_{G_i}$. Recall that $\Ker \exp_{G_0} = \Gamma_0$. Take $\gamma_1 \in \Ker \exp_{G_1}$. Since $\varphi(q_1(\Ker \exp_{G_1})) = q_2(\Ker \exp_{G_2})$, there exists $\gamma_2 \in \Ker \exp_{G_2}$ such that $q_2(\gamma_2) = \varphi \circ q_1(\gamma_1)$. Then we have $(\gamma_1,\gamma_2) \in \Gamma_0$ with $d\alpha_1(\gamma_1,\gamma_2)=\gamma_1$, 
showing that $d\alpha_1 (\Ker \exp_{G_0}) = \Ker \exp_{G_1}$. Similarly, $d\alpha_2 (\Ker \exp_{G_0}) = \Ker \exp_{G_2}$. Thus, $\Ker \alpha_i$ is connected.
\end{proof}
	
The following result is an adaptation of~\cite[Theorem~11.2, Remark~11.3]{ishida2013complex} to our situation.

\begin{thm}\label{thm:ess.surjective}
The map $\widetilde{\mathscr{F}}_1 \colon \mathscr{C}_1 \to \widetilde{\mathscr{C}}_2$ is essentially surjective. Furthermore, for any marked fan $(\widetilde{V}, \widetilde{\Gamma}, \widetilde{\Sigma}, \widetilde{\lambda}) \in \widetilde{\mathscr{C}}_2$, there exists a moment-angle manifold $\momang$ with a $T^m$-invariant complex structure such that its marked fan $\widetilde{\mathscr{F}}_1(\momang, T^m, y)$ is isomorphic to $(\widetilde{V}, \widetilde{\Gamma}, \widetilde{\Sigma}, \widetilde{\lambda})$. 
\end{thm}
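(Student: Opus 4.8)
The plan is to prove the more precise second assertion; essential surjectivity of $\widetilde{\mathscr F}_1$ then follows immediately, since every moment-angle manifold with an invariant complex structure is an object of $\mathscr C_1$. Fix a complete simplicial marked fan $(\widetilde V,\widetilde\Gamma,\widetilde\Sigma,\widetilde\lambda)$, set $n=\dim\widetilde V$, and let $\rho_1,\dots,\rho_\ell$ be the $1$-cones of $\widetilde\Sigma$ with $a_i:=\widetilde\lambda(\rho_i)\in\widetilde\Gamma$. The one point where care is needed is the choice of the number $m$ of coordinates: the $a_1,\dots,a_\ell$ already span $\widetilde V$ linearly (the rays of a complete fan do), but the subgroup of $\widetilde V$ they generate need not be all of $\widetilde\Gamma$. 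I would therefore also fix a finite generating set $b_1,\dots,b_k$ of the group $\widetilde\Gamma$, put $m:=\ell+k$ (repeating one of the $b_j$ if necessary so that $m-n$ is even), take $\SK$ to be the underlying simplicial complex of $\widetilde\Sigma$ on the vertex set $\{1,\dots,\ell\}$ with $\ell+1,\dots,m$ declared ghost vertices, and define the surjection $q\colon\R^m\to\widetilde V$ by $\mb e_i\mapsto a_i$ for $i\le\ell$ and $\mb e_{\ell+j}\mapsto b_j$. Then $\Ker q$ has even real dimension $m-n$; and since $\widetilde\Sigma$ is simplicial one checks directly that $q(\Sigma_\SK)=\widetilde\Sigma$ and that $\sigma\mapsto q(\sigma)$ is a bijection of cones.

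Next I would put a complex structure on $\Ker q$: choose any $\R$-linear $J$ on $\Ker q$ with $J^2=-\id$ and set $\h:=\{v-iJv\colon v\in\Ker q\}\subset\C^m$. A one-line verification shows that $\h$ is a complex subspace of $\C^m=\textgoth{t}^\C$ of complex dimension $\tfrac{m-n}2$, and that $\mathrm{Re}(v-iJv)=v$, so the composite $\h\hookrightarrow\C^m\xrightarrow{\mathrm{Re}}\R^m$ is injective with image $\Ker q$; in particular $q\circ\mathrm{Re}|_\h=0$. Thus conditions (a) and (b) of Construction~\ref{czk} hold. Since $\Sigma_\SK$ is automatically nonsingular with respect to the lattice $\Z^m\subset\R^m=\textgoth{t}$, and $q(\Sigma_\SK)=\widetilde\Sigma$ is complete with bijective cone map, we get $(\Sigma_\SK,\h,T^m)\in\mathscr C_2$. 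By Construction~\ref{czk} (that is, \cite[Theorem~3.3]{panov2012complex} and \cite[Theorem~7.9]{ishida2013complex}), the subgroup $H=\exp(\h)\subset(\C^\times)^m$ acts freely and properly on $U(\SK)$ and $\momang\cong U(\SK)/H$ carries a $T^m$-invariant complex structure. Taking the base point $y=(1,\dots,1)\in\prod_{i=1}^m\mathbb S\subset\momang$, whose $T^m$-stabiliser is trivial, we obtain an object $(\momang,T^m,y)\in\mathscr C_1$.

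It remains to compute $\widetilde{\mathscr F}_1(\momang,T^m,y)$ and match it with the given marked fan. Since the triple $(\Sigma_\SK,\h,T^m)\in\mathscr C_2$ realises $\momang\cong U(\SK)/H$, uniqueness in the definition of $\mathscr F_1$ gives $\mathscr F_1(\momang,T^m,y)=(\Sigma_\SK,\h,T^m)$. Here $\h'=p(\h)=\mathrm{Re}(\h)=\Ker q$, so the quotient $\textgoth{t}/\h'=\R^m/\Ker q$ is identified with $\widetilde V$ through the linear isomorphism $\bar q$ induced by $q$, and the primitive generator of the ray $\R_{\ge}\langle\mb e_i\rangle\in\Sigma_\SK^{(1)}$ is $\mb e_i$. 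Unwinding the definition of $\widetilde{\mathscr F}_1$ and transporting along $\bar q$, we find: the lattice is $q(\Z^m)=\langle a_1,\dots,a_\ell,b_1,\dots,b_k\rangle=\widetilde\Gamma$ (since $b_1,\dots,b_k$ already generate $\widetilde\Gamma$); the fan is $q(\Sigma_\SK)=\widetilde\Sigma$; and $\widetilde\lambda'\bigl(q(\R_{\ge}\langle\mb e_i\rangle)\bigr)=q(\mb e_i)=a_i=\widetilde\lambda(\rho_i)$. Hence $\bar q$ is an isomorphism of marked fans from $\widetilde{\mathscr F}_1(\momang,T^m,y)$ onto $(\widetilde V,\widetilde\Gamma,\widetilde\Sigma,\widetilde\lambda)$, which proves the theorem.

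The only genuinely delicate point is the one flagged above: one must resist taking $m=\ell$, which would reproduce merely the subgroup of $\widetilde V$ generated by the ray generators rather than $\widetilde\Gamma$ itself; the extra ghost-vertex coordinates carrying a group basis $b_1,\dots,b_k$ of $\widetilde\Gamma$ (which also conveniently absorb the parity correction for $m-n$) are exactly what makes $\widetilde{\mathscr F}_1$ hit $\widetilde\Gamma$ on the nose. Everything else is the linear-algebra check for $\h$ together with the already-established machinery of Construction~\ref{czk} and the equivalence $\mathscr C_1\simeq\mathscr C_2$ of \cite[Theorem~8.2]{ishida2013complex}.
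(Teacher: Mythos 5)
Your proposal is correct and follows essentially the same route as the paper: enlarge the coordinate set beyond the rays by ghost vertices carrying extra generators of $\widetilde{\Gamma}$ (fixing the parity of $m-\dim\widetilde V$ at the same time), map $\R^m\to\widetilde V$ accordingly, and invoke Construction~\ref{czk} to produce $\h$ with $\mathrm{Re}(\h)=\Ker q$. The one subtlety you flag --- that the ray generators alone need not generate $\widetilde{\Gamma}$ --- is exactly the point the paper's proof handles by appending $\widetilde{\gamma}_{m'+1},\dots,\widetilde{\gamma}_m$; your version merely spells out the choice of $J$ and the final identification of marked fans in more detail.
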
 

\begin{proof}%[Proof of Theorem \ref{thm:ess.surjective}]
Let $(\widetilde{V}, \widetilde{\Gamma}, \widetilde{\Sigma}, \widetilde{\lambda}) \in \widetilde{\mathscr{C}}_2$. Let $\widetilde\Sigma^{(1)}=
\{\widetilde{\rho}_1,\dots, \widetilde{\rho}_{m'}\}$ be the set of $1$-cones of~$\widetilde{\Sigma}$, and let $\SK$ be the underlying simplicial complex of $\widetilde\Sigma$, given by
\[			
  \SK := \{ \{i_1,\dots, i_k\} \subset \{1,\dots, m'\} \mid \rho_{i_1} + \dots    
  +\rho_{i_k} \in \widetilde{\Sigma}\}.
\]		
Put $\widetilde{\gamma}_j := \widetilde{\lambda}(\rho_j)$ for $j=1,\dots, m'$. Since $\widetilde{\Gamma}$ is finitely generated, we can choose elements $\widetilde{\gamma}_{m'+1}, \dots, \widetilde{\gamma}_{m}$, $m\geqslant m'$, such that $\widetilde{\gamma}_1,\dots, \widetilde{\gamma}_{m'}, \widetilde{\gamma}_{m'+1}, \dots, \widetilde{\gamma}_{m}$ generate $\widetilde{\Gamma}$ and $m-\dim\widetilde V$ is nonnegative and even. For $i=1,\dots, m$, let $\mb e_i$ denote the standard basis vectors of $\R^m$. The collection of cones $\Sigma _\SK := \{ \R_\ge\langle\mb e_i \colon i \in I\rangle \colon I \in \SK \}$ is the fan of the toric variety $U(\SK)$.  
		
Let $\Lambda \colon \R^m \to \widetilde{V}$ be the linear map given by $\Lambda (\mb e_i) = \widetilde{\gamma}_i$ for $i=1,\dots, m$. Then there exists a $\C$-subspace $\h$ of $\C^m$ such that $\mathop{\mathrm{Re}} \colon \C^m \to \R^m$ restricted to $\h$ is injective and $\mathop{\mathrm{Re}}(\h) = \Ker \Lambda$ (see Construction~\ref{czk}). Therefore $(\Sigma, \h, T^m) \in \mathscr{C}_1$, and the moment-angle manifold $\momang = U(\SK)/H$ has the required properties.
\end{proof}

\begin{rmk}
Theorem~\ref{thm:ess.surjective} also implies that the map $\widetilde{\mathscr{F}}_1 \colon \mathscr{C}_1 \to \widetilde{\mathscr{C}}_2$ restricted to LVMB-manifolds is essentially surjective, because every complex moment-angle manifold is an LVMB-manifold.
\end{rmk}

We can finally describe the basic cohomology ring of an arbitrary complex manifold with a maximal torus action:

\begin{thm}\label{basiccohgen}
Let $(M,G,y) \in \mathscr{C}_1$ be a complex manifold with a maximal torus action, and let $(\widetilde{V}, \widetilde{\Gamma}, \widetilde{\Sigma}, \widetilde{\lambda}) = \widetilde{\mathscr{F}}_1(M,G,y)$ be the corresponding marked fan data. Let $\widetilde\Sigma^{(1)}=
\{\widetilde{\rho}_1,\dots, \widetilde{\rho}_{m}\}$ be the set of $1$-cones of~$\widetilde{\Sigma}$. There is an isomorphism of algebras:
		\[
		  \hb^*(M) \cong \mathbb{R}[v_1,\ldots,v_m] /(I_{\SK}+J),
		\]
		where $I_\SK$ is the Stanley--Reisner ideal of the underlying simplicial complex of $\widetilde{\Sigma}$, and $J$ is the ideal generated by the linear forms
		\[
		  \sum_{i=1}^m \langle \mb u, \widetilde\lambda(\widetilde\rho_i) \rangle v_i,\quad \mb u\in \widetilde{V}^*.
		\]
\end{thm}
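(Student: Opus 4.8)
The plan is to reduce Theorem~\ref{basiccohgen} to the moment-angle case, Theorem~\ref{basicTh}, by means of the transverse equivalence machinery of Section~\ref{transverse}. Given $(M,G,y)\in\mathscr{C}_1$ with marked fan data $(\widetilde V,\widetilde\Gamma,\widetilde\Sigma,\widetilde\lambda)=\widetilde{\mathscr F}_1(M,G,y)$, I would first invoke Theorem~\ref{thm:ess.surjective} to produce a moment-angle manifold $\momang$ with a $T^N$-invariant complex structure ($N\ge m$, possibly with ghost vertices adjoined to $\SK$ so that $N+\dim\widetilde V$ is even) whose marked fan $\widetilde{\mathscr F}_1(\momang,T^N,y')$ is isomorphic to $(\widetilde V,\widetilde\Gamma,\widetilde\Sigma,\widetilde\lambda)$. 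By Theorem~\ref{thm:fundamental}, $(M,G,y)$ and $(\momang,T^N,y')$ are then p-equivalent, hence by Theorem~\ref{thm:equivalence} their canonical foliations are transversely equivalent, and Proposition~\ref{bcohinv} yields a DGA isomorphism $\varOmega^*_{\mathrm{bas}}(M)\cong\varOmega^*_{\mathrm{bas}}(\momang)$, hence an isomorphism of graded algebras $\hb^*(M)\cong\hb^*(\momang)$. It then remains only to recognise the right-hand side of Theorem~\ref{basicTh} for $\momang$ as the ring in the statement.

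For that last step, two easy observations are needed. First, the ring $\mathbb{R}[v_1,\ldots,v_m]/(I_\SK+J)$ of the statement depends only on the isomorphism class of the marked fan: an isomorphism $\varphi$ of marked fans relabels the $1$-cones (hence the variables $v_i$), identifies the underlying simplicial complexes (hence the ideals $I_\SK$), and, via $\varphi^*$ together with the compatibility $\widetilde\lambda_2\circ\varphi=\varphi\circ\widetilde\lambda_1$, carries the linear generators $\sum_i\langle\mb u,\widetilde\lambda_1(\widetilde\rho_i)\rangle v_i$ of $J$ onto those of the target, so the two quotient rings are isomorphic. Second, for the manifold $\momang$ produced by Theorem~\ref{thm:ess.surjective} the projection $q\colon\mathfrak t\to\mathfrak t/\h'$ of Theorem~\ref{basicTh} is, under the identification $\mathfrak t/\h'\cong\widetilde V$, exactly the map $\Lambda$ of that construction, so that $q(\mb e_i)=\widetilde\lambda(\widetilde\rho_i)$ on the indices $i$ corresponding to $1$-cones of $\widetilde\Sigma$, while each ghost index $i$ satisfies $\{i\}\notin\SK$, i.e.\ $v_i\in I_\SK$. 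Passing to the quotient annihilates the ghost variables together with the corresponding terms in the generators of $J$, and Theorem~\ref{basicTh} then delivers precisely $\hb^*(\momang)\cong\mathbb{R}[v_1,\ldots,v_m]/(I_\SK+J)$ with $I_\SK$ the Stanley--Reisner ideal of the underlying complex of $\widetilde\Sigma$ and $J$ generated by $\sum_{i=1}^m\langle\mb u,\widetilde\lambda(\widetilde\rho_i)\rangle v_i$, $\mb u\in\widetilde V^*$. Combined with $\hb^*(M)\cong\hb^*(\momang)$ and the first observation, this proves the theorem.

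All the substantive input is already in place: formality of the Cartan model and the Eilenberg--Moore computation behind Theorem~\ref{basicTh}, the classification-based construction of the intermediate object in Theorems~\ref{thm:fundamental} and~\ref{thm:ess.surjective}, and the transverse invariance of basic forms in Proposition~\ref{bcohinv}. Consequently the only real work is the combinatorial bookkeeping of the second paragraph --- matching the extrinsic presentation of Theorem~\ref{basicTh}, written in terms of $q$ and the chosen generators $q(\mb e_i)$ of the $1$-cones of $\Sigma=q(\Sigma_\SK)$, with the intrinsic marked fan data, and checking that ghost vertices contribute nothing to the quotient ring. I expect this bookkeeping, rather than any conceptual difficulty, to be the main (and essentially the only) obstacle.
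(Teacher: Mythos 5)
Your proposal follows exactly the paper's own proof: reduce to a moment-angle manifold with isomorphic marked fan data via Theorem~\ref{thm:ess.surjective}, invoke Theorem~\ref{thm:fundamental}, Theorem~\ref{thm:equivalence} and Proposition~\ref{bcohinv} to transfer basic cohomology, and conclude with Theorem~\ref{basicTh}. The only difference is that you spell out the combinatorial bookkeeping (invariance of the ring under marked-fan isomorphism and the vanishing of ghost-vertex contributions), which the paper leaves implicit; your treatment of that step is correct.
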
 
\begin{proof}
By Theorem~\ref{thm:ess.surjective}, for the manifold $M$, there exist a moment-angle manifold~$\momang$ with isomorphic marked fan data. By Theorem~\ref{thm:fundamental}, the manifolds $\momang$ and $M$ are p-equivalent as manifolds with maximal torus actions. By Proposition~\ref{bcohinv}, their basic cohomology algebras are isomorphic. Finally, the basic cohomology algebra of~$\momang$ is described by Theorem~\ref{basicTh}.
\end{proof}

\end{document}